\newcommand{\mubar}{\bar{\mu}}
\newcommand{\Dol}{\operatorname{Dol}}
\newcommand{\canbund}{\omega}
\renewcommand{\Im}{\operatorname{Im}}
\renewcommand{\Re}{\operatorname{Re}}
\renewcommand*{\arraystretch}{1.5}
\title[Almost complex parallelizable manifolds]{Almost complex parallelizable manifolds: Kodaira dimension and special structures}
\author{Andrea Cattaneo}
\address{Universit\`a di Parma\\
Dipartimento di Scienze Matematiche, Fisiche e Informatiche\\
Unit\`a di Matematica e Informatica\\
Parco Area delle Scienze 53/A, 43124, Parma, Italy}
\email{andrea.cattaneo@unipr.it}
\author{Antonella Nannicini}
\address{Dipartimento di Matematica ed Informatica ``U. Dini''\\
Universit\`a degli Studi di Firenze\\
Viale Morgagni 67/A\\
50134 Firenze, Italy}
\email{antonella.nannicini@unifi.it}
\author{Adriano Tomassini}
\address{Universit\`a di Parma\\
Dipartimento di Scienze Matematiche, Fisiche e Informatiche\\
Unit\`a di Matematica e Informatica\\
Parco Area delle Scienze 53/A, 43124, Parma, Italy}
\email{adriano.tomassini@unipr.it}
\keywords{Kodaira dimension, almost complex manifolds, parallelizable manifolds}
\thanks{This work was partially supported by the Project PRIN 2017 ``Real and Complex Manifolds: Topology, Geometry and holomorphic dynamics'' and by GNSAGA of INdAM.}
\subjclass[2010]{32Q60, 53C56}
\begin{document}

\begin{abstract}
We study the Kodaira dimension of a real parallelizable manifold $M$, with an almost complex structure $J$ in standard form with respect to a given parallelism. For $X = (M, J)$ we give conditions under which $\kod(X) = 0$. We provide examples in the case $M = G \times G$, where $G$ is a compact connected real Lie group. Finally we describe geometrical properties of real parallelizable manifolds in the framework of statistical geometry.
\end{abstract}

\maketitle

\tableofcontents

\section{Introduction}

In complex geometry one of the most studied invariant of a manifold is its Kodaira dimension, which encodes the asymptotic behaviour of the dimension of the spaces of holomorphic pluricanonical sections on the given manifold. It can be defined in other different ways, e.g., as the maximal dimension of the image of the manifold via the pluricanonical maps or in the projective cases as the dimension of the canonical model of the original manifold (this works since the canonical ring of a smooth projective manifold is finitely generated by \cite{BCHM}). Recently the first definition of Kodaira dimension has been generalized in the context of almost complex geometry by Chen and Zhang (see \cite[Definition 1.2]{Chen-Zhang-1}) so we can speak of the \emph{Kodaira dimension of an almost complex manifold} (see also \cite{Cattaneo-Nannicini-Tomassini-1} and \cite{Cattaneo-Nannicini-Tomassini-2} for explicit computations of Kodaira dimension on some families of almost complex manifolds). Despite the fact that the definition proposed in \cite{Chen-Zhang-1} actually is the same for almost complex manifolds as it is for genuine complex manifolds, up to now it is not fully understood and also the techniques which can be used to compute it on concrete examples are still very few and basically rely on ad hoc procedures strictly linked with the geometry of the differentiable manifold underlying the almost complex manifold.

The class of parallelizable differentiable manifolds, i.e., those manifolds whose tangent bundle is trivial, is a particularly handy class of manifolds. For example, to define an almost complex structure on a parallelizable manifold it is enough to give a square matrix with entries in the smooth functions whose square is $-\id$. For this reason in the present paper we study some properties of almost complex manifolds whose underlying differentiable manifold is parallelizable. Observe that the fact that the \emph{real} tangent bundle of an almost complex manifold is trivial does not imply that also the \emph{pseudoholomorphic} tangent bundle is trivial as well, nor that the canonical bundle is pseudoholomorphically trivial. As a consequence, we study which conditions a smooth pluricanonical section must satisfy in order for it to be pseudoholomorphic, and we find that these conditions can be summed up into a single system of partial differential equations. We show that the solutions of this system must satisfy a certain elliptic equation of the second order, thus providing an analogue with the integrable case.

As an application of our results, we focus on a special type of almost complex structures which can be defined on the product of a real Lie group with itself. We show that for these manifolds the canonical bundle is pseudoholomorphically trivial if and only if the Lie group is unimodular. It is an interesting fact that our condition \eqref{eq: condition trivial canonical}, which expresses the unimodularity of the Lie group, is the same which appeared in \cite {Abbena-Grassi}.

In the last section we describe geometrical properties of almost complex real parallelizable manifolds in the framework of statistical geometry. Statistical structures play a central role in information geometry \cite {Amari}, \cite {Lauritzen}. This notion was extended to quasi-statistical structures, or statistical structures admitting torsion, by Kurose, \cite {K}. Starting from the construction of natural Norden structures, described in our previous paper \cite {Cattaneo-Nannicini-Tomassini-2} for almost complex $4-$dimensional solvmanifolds, and from a general construction of quasi-statistical structures, given in \cite {Blaga-Nannicini}, we define a natural family of quasi-statistical structures on a real parallelizable manifold $M$ of even dimension. Finally we observe that, by using the Sasaki metric, we can define a corresponding family of almost K\"ahler structures on the tangent bundle of $M$, $TM$. 

\begin{ack}
The authors wolud like to thank the anononymous referee for their review and the useful comments which allow us to improve the presentation of the paper.
\end{ack}

\section{Preliminaries}
\subsection{Preliminaries on almost complex manifolds}

\begin{defin}
Let $M$ be a smooth real manifold. An \emph{almost complex structure} on $M$ is an endomorphism $J: TM \longrightarrow TM$ such that $J^2 = -\id_{TM}$. An \emph{almost complex manifold} is a pair $X = (M, J)$ where $M$ is a smooth real manifold and $J$ is an almost complex structure on $M$.
\end{defin}

The existence of an almost complex structure on $M$ forces $\dim_{\IR} M$ to be even, say $\dim_{\IR} M = 2n$.

\begin{defin}
Let $X = (M, J)$ and $X' = (M', J')$ be almost complex manifolds. A \emph{pseudoholomorphic function} $f: X \longrightarrow X'$ is a smooth function $f: M \longrightarrow M'$ such that $df \circ J = J' \circ df$.
\end{defin}

We now show that the transition functions of $TM$ give an example of pseudoholomorphic functions.

\begin{prop}
Let $X = (M, J)$ be an almost complex manifold and let $\cU = \set{(U, \Phi_U)}$ be a trivializing open covering for the tangent bundle $TM$. Let $(U, \Phi_U)$, $(V, \Phi_V) \in \cU$ be such that $U \cap V \neq \varnothing$. Then the transition function
\[\Phi_{VU} = \Phi_V \circ \Phi_U^{-1}: \Phi_U(U \cap V) \longrightarrow \Phi_V(U \cap V)\]
is pseudoholomorphic.
\end{prop}
\begin{proof}
The open subsets $\Phi_U(U)$ and $\Phi_V(V)$ of $\IR^{2n}$ are almost complex manifolds by means of $(\Phi_U^{-1})^* J|_U$ and $(\Phi_V^{-1})^* J|_V$ respectively. The fact that $J$ is \emph{globally} defined on $M$ then implies that
\[d\Phi_{VU} \circ \pa{(\Phi_U^{-1})^* J|_U} = \pa{(\Phi_V^{-1})^* J|_V} \circ d\Phi_{VU}\]
i.e., that $\Phi_{VU}$ is pseudoholomorphic.
\end{proof}

\subsection{Dolbeault cohomology on almost complex manifolds}

Let $X = (M, J)$ be an almost complex manifold. From the complex point of view, the exterior differential $d$ splits as
\[d = \mu + \del + \delbar + \mubar, \qquad \text{with } \begin{array}{l}
\mu: \bigwedge^{p, q} X \longrightarrow \bigwedge^{p + 2, q - 1} X,\\
\del: \bigwedge^{p, q} X \longrightarrow \bigwedge^{p + 1, q} X,\\
\delbar: \bigwedge^{p, q} X \longrightarrow \bigwedge^{p, q + 1} X,\\
\mubar: \bigwedge^{p, q} X \longrightarrow \bigwedge^{p - 1, q + 2} X.
\end{array}\]
The condition that $d^2 = 0$ then translates into different equations involving these four operators, in particular we mention the following:
\[\mubar^2 = 0, \qquad \mubar \delbar + \delbar \mubar = 0, \qquad \del \mubar + \delbar^2 + \mubar \del = 0.\]

Observe that in the genuine almost complex setting it is not true that $\delbar^2 = 0$, so we can not define the Dolbeault cohomology of an almost complex manifold as in the integrable case. Following \cite{Cirici-Wilson} we define the generalized Dolbeault cohomology $H^{p, q}_{\Dol}(X)$ as follows. Since $\mubar^2 = 0$ we can define the $\mubar$-cohomology
\[H^{p, q}_{\mubar}(X) = \frac{\ker \pa{\Gamma\pa{X, \bigwedge^{p, q}X} \xrightarrow{\mubar} \Gamma\pa{X, \bigwedge^{p - 1, q + 2}X}}}{\im \pa{\Gamma\pa{X, \bigwedge^{p + 1, q - 2}X} \xrightarrow{\mubar} \Gamma\pa{X, \bigwedge^{p, q}X}}}\]
as usual. By the equation $\mubar \delbar + \delbar \mubar = 0$, the map $\delbar: H^{p, q}_{\mubar}(X) \longrightarrow H^{p, q + 1}_{\mubar}(X)$ defined on the class $[\alpha]_{\mubar} \in H^{p, q}_{\mubar}(X)$ represented by the $\mubar$-closed $(p, q)$-form $\alpha$ by $\delbar[\alpha]_{\mubar} = [\delbar \alpha]_{\mubar}$ is a well defined homomorphism. Finally, the equation $\del \mubar + \delbar^2 + \mubar \del = 0$ implies that $\delbar^2 = 0$ on $H^{p, q}_{\mubar}(X)$, so we define the \emph{Dolbeault cohomology} of $X$ as
\[H^{p, q}_{\Dol}(X) = \frac{\ker \pa{H^{p, q}_{\mubar}(X) \xrightarrow{\delbar} H^{p, q + 1}_{\mubar}(X)}}{\im \pa{H^{p, q - 1}_{\mubar}(X) \xrightarrow{\delbar} H^{p, q}_{\mubar}(X)}}.\]

\subsection{Kodaira dimension of an almost complex manifold}

In complex geometry one of the first invariants which are naturally attached to a manifold is the Kodaira dimension. This can equivalently be defined in two ways: given a \emph{complex} manifold $X$ its Kodaira dimension $\kod(X)$ is
\begin{enumerate}
\item the maximal dimension of the closure of the image of $X$ under the meromorphic maps associated with the pluricanonical systems $|\canbund_X^{\tensor m}|$,
\item the rate of growth of the spaces of holomorphic pluricanonical sections $H^0(X, \canbund_X^{\tensor m})$.
\end{enumerate}

Recently, Chen and Zhang in \cite{Chen-Zhang-1} showed that the last of these formulations can be successfully extended to the case where $X = (M, J)$ is an almost complex manifold. We give here a very brief account of \cite[$\S$3]{Chen-Zhang-1}. The usual delbar operator $\delbar$ naturally extends to a differential operator $\delbar: \canbund_X^{\tensor m} \longrightarrow \bigwedge^{0, 1} X \tensor \canbund_X^{\tensor m}$ and a smooth pluricanonical section $s \in \Gamma \pa{X, \canbund_X^{\tensor m}}$ is pseudoholomorphic if and only if $\delbar s = 0$. Thus we can still define
\[H^0(X, \canbund_X^{\tensor m}) = \set{s \in \Gamma \pa{X, \canbund_X^{\tensor m}} \st \delbar s = 0}.\]
Exploiting the theory of elliptic operators, in \cite[Theorem 3.6]{Chen-Zhang-1} it is shown that all these spaces are finite dimensional. Hence we have the following definition.

\begin{defin}[{cf.\ \cite[Definition 4.3]{Chen-Zhang-1}}]
Let $X = (M, J)$ be an almost complex manifold. The \emph{Kodaira dimension} of $X$ is the real number
\[\kod(X) = \left\{ \begin{array}{ll}
0 & \text{if } H^0\pa{X, \canbund_X^{\tensor m}} = 0 \quad \forall m \geq 1\\
{\displaystyle \limsup_{m \rightarrow +\infty} \frac{\log \dim H^0\pa{X, \canbund_X^{\tensor m}}}{\log m}} & \text{otherwise}.
\end{array} \right.\]
\end{defin}

\begin{rem}
It is possible to associate to an almost complex manifold its \emph{canonical ring}
\[R(X, \canbund_X) = \bigoplus_{m \geq 0} H^0(X, \canbund_X^{\tensor m}),\]
which is in a natural way a graded $\IC$-algebra (we have $H^0(X, \cO_X) \simeq \IC$ by Lemma \ref{lemma: liouville}). In the smooth projective case this algebra is known to be finitely generated (see \cite{BCHM} or \cite{Kawamata}) but almost nothing is known in the general setting of almost complex manifolds.
\end{rem}

\section{The Kodaira dimension of parallelizable manifolds}\label{parallelizable}

Let $X = (M, J)$ be an almost complex manifold and assume that $M$ is a real parallelizable manifold. Let $\set{v_1, w_1, \ldots, v_n, w_n}$ be a basis for $\Gamma\pa{M, TM}$ with the following properties:
\begin{enumerate}
\item it is a parallelism for $M$;
\item the almost complex structure $J$ is in standard form, namely
\[J v_i = w_i, \qquad J w_i = -v_i, \qquad i = 1, \ldots, n.\]
\end{enumerate}

From the complex point of view, we can introduce the vector fields of type $(1, 0)$
\[\cX_i = \frac{1}{2}\pa{v_i - \ii w_i}, \qquad i = 1, \ldots, n\]
and their dual $(1, 0)$-forms
\[\varphi^i = v^i + \ii w^i, \qquad i = 1, \ldots, n\]
where $\set{v^1, w^1, \ldots, v^n, w^n}$ is the dual basis of $\set{v_1, w_1, \ldots, v_n, w_n}$.

As a consequence
\[\psi = \varphi^1 \wedge \ldots \wedge \varphi^n\]
is a smooth section of $\canbund_X$, and analogously $\psi^{\tensor m}$ is a smooth section of $\canbund_X^{\tensor m}$. We want to compute $\delbar\pa{\psi^{\tensor m}}$ as a section of the bundle $\Lambda^{0, 1}_J M \tensor \canbund_X^{\tensor m}$.

\begin{lemma}
Let $X = (M, J)$ be an almost complex manifold, with $M$ real parallelizable. For every integer $m \geq 1$ we have
\[\delbar\pa{\psi^{\tensor m}} = m \alpha \tensor \psi^{\tensor m},\]
where $\alpha \in \Gamma\pa{X, \Omega^{0, 1}_X}$ is the smooth $(0, 1)$-form defined in \eqref{eq: alpha}.
\end{lemma}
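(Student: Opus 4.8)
The plan is to compute $\delbar\bigl(\psi^{\tensor m}\bigr)$ directly from the structure equations of the parallelism. Since $\psi = \varphi^1 \wedge \ldots \wedge \varphi^n$ and $\delbar$ is a derivation (of degree $(0,1)$) on the bigraded algebra of forms, I would first write down the Leibniz rule
\[\delbar\bigl(\psi^{\tensor m}\bigr) = m\,\delbar\psi \tensor \psi^{\tensor m - 1},\]
so that everything reduces to understanding $\delbar\psi$ as a section of $\Omega^{0,1}_X \tensor \cK_X$. The key input is the behaviour of $\delbar$ on the individual $(1,0)$-forms $\varphi^i$. Applying $d = \mu + \del + \delbar + \mubar$ to $\varphi^i$ and extracting the $(1,1)$-component, I would record $\delbar\varphi^i$ in terms of the structure constants of the parallelism, i.e. the coefficients appearing when $d\varphi^i$ is expanded in the basis $\{\varphi^j \wedge \varphi^k,\ \varphi^j \wedge \bar\varphi^k,\ \bar\varphi^j \wedge \bar\varphi^k\}$. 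Only the $\varphi^j \wedge \bar\varphi^k$ terms contribute to $\delbar\varphi^i$.

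**Extracting the one-form $\alpha$.**
Next I would expand $\delbar\psi$ using the graded Leibniz rule across the wedge product,
\[\delbar\psi = \sum_{i=1}^n (-1)^{i-1}\, \varphi^1 \wedge \ldots \wedge \delbar\varphi^i \wedge \ldots \wedge \varphi^n.\]
Because $\delbar\varphi^i$ is a $(1,1)$-form, each summand is an $(n,1)$-form, and the wedge of the remaining $\varphi^j$ ($j \neq i$) with the $(1,0)$-part of $\delbar\varphi^i$ kills every term except the diagonal one in which $\delbar\varphi^i$ contributes its own $\varphi^i \wedge \bar\varphi^k$ piece. This is exactly the mechanism that collapses the sum to a multiple of $\psi$: writing $\delbar\varphi^i = \sum_{j,k} c^i_{jk}\,\varphi^j \wedge \bar\varphi^k$, only the $j = i$ terms survive the wedge, and I expect to obtain $\delbar\psi = \alpha \wedge \psi$ where $\alpha = -\sum_{i,k} c^i_{ik}\,\bar\varphi^k$ is the trace-type $(0,1)$-form. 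Viewing $\alpha \wedge \psi$ as an element of $\Omega^{0,1}_X \tensor \cK_X$ then gives $\delbar\psi = \alpha \tensor \psi$, and combining with the Leibniz computation above yields $\delbar\bigl(\psi^{\tensor m}\bigr) = m\,\alpha \tensor \psi^{\tensor m}$, which defines $\alpha$ as in \eqref{eq: alpha}.

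**Main obstacle.**
The routine part is the derivation property and the combinatorics of the collapsing sum; the delicate point is the correct bookkeeping of indices and signs in identifying precisely which structure coefficients enter $\alpha$, together with the verification that the off-diagonal terms genuinely vanish rather than merely being rearranged. I would want to check carefully that $\delbar$ acting on a globally defined form of top holomorphic degree produces only the trace contraction $\alpha$ and no spurious $(n,1)$ pieces; concretely this requires confirming that any $\varphi^j \wedge \bar\varphi^k$ term with $j \neq i$, once wedged with the surviving $\varphi$'s, repeats a $(1,0)$-factor and is annihilated. Once this vanishing is pinned down, the identification $\delbar\psi = \alpha \tensor \psi$ is forced, and the tensor-power formula follows immediately from the first Leibniz step.
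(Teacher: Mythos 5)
Your proposal is correct and follows essentially the same route as the paper's proof: expand $\delbar\varphi^i$ in the coframe as $\sum_{j,k}\lambda^i_{jk}\,\varphi^j\wedge\bar\varphi^k$, apply the graded Leibniz rule across the wedge product defining $\psi$, note that only the diagonal terms $j=i$ survive (all others repeat a $(1,0)$-factor), and conclude via the Leibniz rule on the tensor power. The only cosmetic difference is the order of steps — you state the tensor-power Leibniz rule first and then compute $\delbar\psi$, while the paper does the reverse — and this has no mathematical consequence.
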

\begin{proof}
For every $i = 1, \ldots, n$ there exist smooth functions $\lambda^i_{jk}$ on $X$ such that
\begin{equation}\label{eq: def lambda}
\delbar \varphi^i = \sum_{j, k = 1}^n \lambda^i_{jk} \varphi^j \wedge \bar{\varphi}^k.
\end{equation}
It follows that
\[\begin{array}{rl}
\delbar \psi = & \delbar \pa{\varphi^1 \wedge \ldots \wedge \varphi^n} =\\
= & \sum_{i = 1}^n (-1)^{i + 1} \varphi^1 \wedge \ldots \wedge \delbar \varphi^i \wedge \ldots \wedge \varphi^n =\\
= & \sum_{i = 1}^n (-1)^{i + 1} \varphi^1 \wedge \ldots \wedge \pa{\sum_{j, k = 1}^n \lambda^i_{jk} \varphi^j \wedge \bar{\varphi}^k} \wedge \ldots \wedge \varphi^n =\\
= & \sum_{i = 1}^n (-1)^{i + 1} \varphi^1 \wedge \ldots \wedge \pa{\sum_{k = 1}^n \lambda^i_{ik} \varphi^i \wedge \bar{\varphi}^k} \wedge \ldots \wedge \varphi^n =\\
= & \sum_{i, k = 1}^n -\lambda^i_{ik} \bar{\varphi}^k \wedge \varphi^1 \wedge \ldots \wedge \varphi^n =\\
= & \alpha \tensor \psi,
\end{array}\]
where
\begin{equation}\label{eq: alpha}
\alpha = - \sum_{k = 1}^n \alpha_k \bar{\varphi}^k, \qquad \alpha_k = \sum_{i = 1}^n \lambda^i_{ik}.
\end{equation}
Finally we observe that
\[\delbar\pa{\psi^{\tensor m}} = \sum_{i = 1}^m \psi \tensor \ldots \tensor \underbrace{\delbar \psi}_{i\text{-th place}} \tensor \ldots \tensor \psi = \sum_{i = 1}^m \alpha \tensor \psi^{\tensor m} = m \alpha \tensor \psi^{\tensor m},\]
which proves our result.
\end{proof}

\begin{rem}
It is easy to observe that
\[\alpha(\blank) = -\sum_{i = 1}^n \delbar \varphi^i(\cX_i, \blank).\]
\end{rem}

\begin{lemma}\label{lemma: pseudohol condition}
Let $X = (M, J)$ be an almost complex manifold, with $M$ real parallelizable. A smooth pluricanonical section $f \cdot \psi^{\tensor m}$ is pseudoholomorphic if and only if
\begin{equation}\label{eq: pseudohol cond 1}
\delbar f + mf \alpha = 0.
\end{equation}
\end{lemma}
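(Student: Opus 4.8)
The plan is to compute $\delbar(f \cdot \psi^{\tensor m})$ directly using the Leibniz rule and the previous lemma, and then impose the pseudoholomorphicity condition $\delbar(f \cdot \psi^{\tensor m}) = 0$. Since $\psi^{\tensor m}$ is a fixed global smooth section of $\cK_X^{\tensor m}$ and $f$ is a smooth function, the section $f \cdot \psi^{\tensor m}$ is a generic smooth pluricanonical section (any smooth section differs from $\psi^{\tensor m}$ by a smooth scalar function, since $\psi^{\tensor m}$ is nowhere-vanishing on a parallelizable manifold). By the characterization recalled from Chen--Zhang, such a section is pseudoholomorphic if and only if $\delbar(f \cdot \psi^{\tensor m}) = 0$ in $\Gamma\pa{X, \Omega^{0,1}_X \tensor \cK_X^{\tensor m}}$.

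First I would apply the Leibniz rule for $\delbar$ acting on the tensor product of the scalar function $f$ with the pluricanonical section $\psi^{\tensor m}$:
\[
\delbar\pa{f \cdot \psi^{\tensor m}} = (\delbar f) \tensor \psi^{\tensor m} + f \cdot \delbar\pa{\psi^{\tensor m}}.
\]
Next I would substitute the formula from the preceding lemma, namely $\delbar\pa{\psi^{\tensor m}} = m\, \alpha \tensor \psi^{\tensor m}$, to obtain
\[
\delbar\pa{f \cdot \psi^{\tensor m}} = \pa{\delbar f + m f \alpha} \tensor \psi^{\tensor m}.
\]
Since $\psi^{\tensor m}$ is a nowhere-vanishing section of the line bundle $\cK_X^{\tensor m}$, the right-hand side vanishes if and only if the coefficient $(0,1)$-form $\delbar f + m f \alpha$ vanishes identically, which is exactly equation \eqref{eq: pseudohol cond 1}.

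The main subtlety, rather than a genuine obstacle, is to justify the Leibniz rule in this context: the operator $\delbar$ in the Chen--Zhang setting is the extension of the usual $(0,1)$-part of the exterior derivative to sections of $\cK_X^{\tensor m}$, and one must check that it satisfies $\delbar(f s) = (\delbar f) \tensor s + f\, \delbar s$ for a smooth scalar $f$ and a smooth section $s$. This follows because $\delbar$ is a first-order differential operator built from $d$, which is a derivation; the tensor factor $\psi^{\tensor m}$ plays the role of a local frame and the Leibniz rule is compatible with the bundle structure. One should also remark that the non-vanishing of $\psi^{\tensor m}$ is precisely what lets us pass from the vanishing of a section of $\Omega^{0,1}_X \tensor \cK_X^{\tensor m}$ to the vanishing of the scalar-valued $(0,1)$-form, which is where real parallelizability enters the argument.
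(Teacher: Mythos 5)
Your proof is correct and follows essentially the same route as the paper: apply the Leibniz rule, substitute $\delbar\pa{\psi^{\tensor m}} = m\,\alpha \tensor \psi^{\tensor m}$ from the preceding lemma, and conclude from the fact that $\psi^{\tensor m}$ is nowhere vanishing. The only difference is that you spell out the justification of the Leibniz rule and the role of non-vanishing explicitly, which the paper leaves implicit.
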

\begin{proof}
By the Leibniz rule
\[\delbar\pa{f \cdot \psi^{\tensor m}} = \delbar f \otimes \psi^{\tensor m} + f \delbar\pa{\psi^{\tensor m}} = \pa{\delbar f + mf \alpha} \tensor \psi^{\tensor m}.\]
\end{proof}

\begin{cor}\label{cor: propagation of solutions}
Let $X = (M, J)$ be an almost complex manifold, with $M$ real parallelizable, let $f: M \longrightarrow \IC$ be a smooth function and let $t \in \IZ_{\geq 1}$. Then $f$ is a solution of $\delbar g + tg\alpha = 0$ if and only if $f^p$ is a solution of $\delbar g + ptg\alpha = 0$ for every $p \in \IZ_{\geq 1}$.
\end{cor}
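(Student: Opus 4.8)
The plan is to reduce the whole statement to the fact that $\delbar$ acts as a derivation on smooth functions, and to the already established Lemma \ref{lemma: pseudohol condition}. First I would record the Leibniz property: on a $(0,0)$-form the operators $\mu$ and $\mubar$ vanish for bidegree reasons, so $df = \del f + \delbar f$ for every smooth $f$, and projecting the identity $d(fg) = (df) g + f (dg)$ onto bidegree $(0,1)$ yields $\delbar(fg) = (\delbar f) g + f (\delbar g)$. By an immediate induction this gives, for every $p \in \IZ_{\geq 1}$,
\[\delbar\pa{f^p} = p\, f^{p - 1}\, \delbar f.\]

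For the forward implication I would assume $\delbar f + t f \alpha = 0$, that is $\delbar f = -t f \alpha$, and substitute this into the Leibniz identity above to obtain
\[\delbar\pa{f^p} = p\, f^{p - 1}\, \delbar f = -p t\, f^{p}\, \alpha,\]
which is precisely $\delbar\pa{f^p} + p t\, f^{p}\, \alpha = 0$. Hence $f^p$ solves the equation with coefficient $pt$ for every $p \geq 1$. For the converse I would simply specialize the hypothesis to $p = 1$: if $f^p$ solves $\delbar g + p t\, g \alpha = 0$ for every $p \geq 1$, then in particular $f = f^1$ solves $\delbar g + t\, g \alpha = 0$.

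It is worth recording a conceptual reformulation through Lemma \ref{lemma: pseudohol condition}: saying that $f$ solves $\delbar g + t g \alpha = 0$ is equivalent to saying that $f \cdot \psi^{\tensor t}$ is a pseudoholomorphic section of $\cK_X^{\tensor t}$, and saying that $f^p$ solves $\delbar g + p t\, g \alpha = 0$ is equivalent to $f^p \cdot \psi^{\tensor p t} = \pa{f \cdot \psi^{\tensor t}}^{\tensor p}$ being pseudoholomorphic. The corollary then expresses the expected fact that tensor powers of a pseudoholomorphic pluricanonical section are again pseudoholomorphic, the converse being the trivial extraction of the first power.

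I do not expect any genuine obstacle here, as the argument is purely formal. The only point requiring care is the justification that $\delbar$ restricts to a derivation on $C^\infty(M, \IC)$ even though $\delbar^2 \neq 0$ in the non-integrable setting; this is exactly where one must invoke the vanishing of $\mu$ and $\mubar$ on $(0,0)$-forms, so that on functions $\delbar$ is just the $(0,1)$-component of the derivation $d$.
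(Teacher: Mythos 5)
Your proposal is correct and follows essentially the same route as the paper's own proof: the nontrivial direction is the computation $\delbar(f^p) + pt\, f^p \alpha = p f^{p-1}\bigl(\delbar f + t f \alpha\bigr) = 0$, and the converse is just the specialization $p = 1$. The only difference is that you explicitly justify the Leibniz identity $\delbar(f^p) = p f^{p-1} \delbar f$ on functions via the bidegree decomposition of $d$ (a worthwhile precaution in the non-integrable setting), whereas the paper uses it without comment.
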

\begin{proof}
The `only if' statement is clear, it suffices to choose $p = 1$. For the `if' statement observe that
\[\delbar(f^p) + pt f^p \alpha = p f^{p - 1} \delbar f + pt f^p \alpha = p f^{p - 1} \pa{\delbar f + tf \alpha} = 0.\]
\end{proof}

Focusing on the smooth canonical section $\psi$ we have two cases: either it is pseudoholomorphic or not. We show that, as one may expect, in the former case the Kodaira dimension of $X$ is $0$.

We now show that one of the most known and used results for complex manifolds, Liouville's theorem, still holds in the context of almost complex geometry. Our proof relies on the argument in \cite[Theorem 3.6]{Chen-Zhang-1}, which is the essential ingredient to use the maximum principle.

\begin{lemma}[Liouville's Theorem]\label{lemma: liouville}
Let $X = (M, J)$ be an almost complex manifold, with $M$ compact. Then $H^0(X, \cO_X) \simeq \IC$.
\end{lemma}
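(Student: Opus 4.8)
The plan is to identify $H^0(X,\cO_X)$ with the space of pseudoholomorphic functions $f\colon M\to\IC$, i.e. the smooth $f$ with $\delbar f=0$, and to show that it contains nothing beyond the constants; since the constants form a copy of $\IC$, this gives $H^0(X,\cO_X)\simeq\IC$. Note first that on a function one has $df=\del f+\delbar f$, the $\mu$- and $\mubar$-components landing in $\bigwedge^{2,-1}X$ and $\bigwedge^{-1,2}X$ and hence vanishing; so the condition $\delbar f=0$ is simply $df=\del f$, i.e. $df$ is of type $(1,0)$. The whole argument will then rest on a maximum-principle applied to $u=\lvert f\rvert^2$ on the compact manifold $M$.

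Fix a Hermitian metric $g$ compatible with $J$ and let $\Delta$ be the associated Laplacian on functions, normalised so that $\Delta u\le 0$ at an interior maximum of $u$. The model is the integrable case: there $\delbar f=0$ forces $\ii\,\del\delbar\lvert f\rvert^2=\ii\,\del f\wedge\overline{\del f}\ge 0$, so $\lvert f\rvert^2$ is plurisubharmonic, hence subharmonic, hence constant on compact $M$. The second step is therefore to compute $\Delta\lvert f\rvert^2$ using $\delbar f=0$. Writing $d\lvert f\rvert^2=\bar f\,\del f+f\,\overline{\del f}$ and differentiating once more, I expect an expression of the shape $\Delta\lvert f\rvert^2=c\,\lvert\nabla f\rvert^2+(\text{lower-order terms})$ with $c>0$, where the lower-order terms are produced by the non-integrability of $J$ (they are governed by $\mu,\mubar$, i.e. by the Nijenhuis tensor) and each carries exactly one derivative of $f$ together with a factor of $f$.

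The third step is to run the maximum principle. Since $M$ is compact, $u=\lvert f\rvert^2$ attains its maximum at some $p\in M$. This is exactly where I would invoke the argument of Chen--Zhang \cite[Theorem 3.6]{Chen-Zhang-1}: the corrections must be organised into a genuine first-order term, so that $u$ satisfies a differential inequality $\Delta u+b\cdot\nabla u\ge 0$ with no zeroth-order term, for a smooth vector field $b$. The operator $L=\Delta+b\cdot\nabla$ is then second-order elliptic and annihilates constants, so Hopf's strong maximum principle forces $u$ to be constant (assuming $M$ connected, as usual). Finally, constancy of $u=\lvert f\rvert^2$ makes $\nabla u\equiv 0$, which kills the first-order corrections and leaves $c\,\lvert\nabla f\rvert^2=0$; hence $\nabla f\equiv 0$ and $f$ is constant.

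The main obstacle is precisely the control of the non-integrability corrections in the second and third steps. Unlike the integrable case, where $\del\delbar\lvert f\rvert^2$ is manifestly a nonnegative $(1,1)$-form, the identities $\mubar\delbar+\delbar\mubar=0$ and $\del\mubar+\delbar^2+\mubar\del=0$ introduce cross terms of the form $\Re\pa{\bar f\,T(\nabla f)}$, with $T$ built from the torsion of $J$. A priori such a term is only bounded by $\varepsilon\lvert\nabla f\rvert^2+C_\varepsilon\lvert f\rvert^2$, and the spurious zeroth-order contribution $C_\varepsilon\lvert f\rvert^2$ would destroy the naive maximum principle. The crux of the proof, following \cite[Theorem 3.6]{Chen-Zhang-1}, is to show that these cross terms in fact assemble into the contraction of $\nabla\lvert f\rvert^2$ with a fixed vector field, so that no genuine zeroth-order term survives and the elliptic maximum principle applies.
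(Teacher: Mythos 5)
Your proposal is correct, but it follows a genuinely different route from the paper's. The paper never leaves the function $f$ itself: it fixes a Hermitian metric, notes that $\del * f = 0$ for degree reasons, so that pseudoholomorphic functions coincide with $\Delta_{\delbar}$-harmonic ones ($\Delta_{\delbar} f = \delbar^*\delbar f$), and then cites the ellipticity of $\Delta_{\delbar}$ from \cite[Theorem 3.6]{Chen-Zhang-1} together with compactness and ``the maximum principle'' to conclude that $f$ is constant. You instead run a Bochner-type argument on the real function $u = |f|^2$ and invoke Hopf's strong maximum principle for a differential inequality of the form $\Delta u + b\cdot\nabla u \geq c\,|\nabla f|^2 \geq 0$. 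Each approach buys something: the paper's is shorter and Hodge-theoretic, but it is silent on exactly the point you worry about --- $\Delta_{\delbar}$ on functions has complex first-order coefficients, so the scalar maximum principle does not literally apply to the complex-valued $f$; your route through $|f|^2$ is the standard way to make that step rigorous. Moreover, the ``crux'' you flag but defer to Chen--Zhang --- that the non-integrability cross terms assemble into first-order terms in $u$ rather than a zeroth-order term $C_\varepsilon |f|^2$ --- has a one-line resolution that would make your argument self-contained: since $\delbar f = 0$ gives $\cX \bar f = \overline{\bar{\cX} f} = 0$ for every $(1,0)$-vector field $\cX$, one has
\[\bar f \, \cX f = \cX\!\left(|f|^2\right) - f\,\cX \bar f = \cX\!\left(|f|^2\right), \qquad f\,\overline{\cX f} = \bar{\cX}\!\left(|f|^2\right),\]
so every commutator/torsion cross term, being a smooth coefficient times $\bar f\,\cX_j f$ or times $f\,\overline{\cX_j f}$, is literally a derivative of $u$; no zeroth-order term ever appears and no $\varepsilon$-absorption is needed. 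Once Hopf gives $u$ constant, your final step works as stated: the identity collapses to $\sum_k |\cX_k f|^2 = 0$, which together with $\bar{\cX}_k f = 0$ gives $df = 0$, hence $f$ constant on the (connected) manifold and $H^0(X, \cO_X) \simeq \IC$.
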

\begin{proof}
Let $h$ be any Hermitian metric on $X$ and denote by $*$ its associate Hodge operator. Let $f: X \longrightarrow \IC$ be any smooth complex valued function on $X$, then we trivially have that $\del * f = 0$ since $*f$ is an $(n, n)$-form. It follows that $\delbar f = 0$ if and only if $\Delta_{\delbar} f = 0$, where $\Delta_{\delbar}$ is the Dolbeault Laplacian $\Delta_{\delbar} = \delbar^* \delbar + \delbar \delbar^*$, i.e., pseudoholomorphic and harmonic functions coincide. Now, by \cite[Theorem 3.6]{Chen-Zhang-1} the equation $\Delta_{\delbar} f = 0$ is an elliptic equation and by the compactness of $X$ and the maximum principle we deduce that $f$ must be constant.
\end{proof}

\begin{prop}\label{prop: kod = 0}
Let $X = (M, J)$ be an almost complex manifold, with $M$ real parallelizable. If $\delbar \psi = 0$, then $\kod(X) = 0$.
\end{prop}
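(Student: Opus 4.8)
The plan is to show that the hypothesis $\delbar\psi = 0$ makes every space of pseudoholomorphic pluricanonical sections one-dimensional, so that the $\limsup$ in the definition of Kodaira dimension vanishes.

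First I would observe that $\psi = \varphi^1\wedge\ldots\wedge\varphi^n$ is a \emph{nowhere-vanishing} global section of $\cK_X$, since the $(1,0)$-forms $\varphi^i$ constitute a global coframe. By the first Lemma of this section (taken with $m = 1$) we have $\delbar\psi = \alpha\tensor\psi$, so $\delbar\psi = 0$ together with the fact that $\psi$ never vanishes forces $\alpha = 0$; equivalently, the coefficients $\alpha_k = \sum_{i=1}^n \lambda^i_{ik}$ of \eqref{eq: alpha} all vanish.

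Next I would exploit that $\psi^{\tensor m}$ is likewise a nowhere-vanishing global section of $\cK_X^{\tensor m}$, so that every smooth section of $\cK_X^{\tensor m}$ is uniquely of the form $f\cdot\psi^{\tensor m}$ for a smooth $f\colon M\to\IC$. By Lemma \ref{lemma: pseudohol condition} such a section is pseudoholomorphic exactly when $\delbar f + mf\alpha = 0$, and since $\alpha = 0$ this collapses to $\delbar f = 0$. Thus
\[ H^0\pa{X, \omega_X^{\tensor m}} = \set{f\cdot\psi^{\tensor m} \st \delbar f = 0}. \]
Here is where Liouville's Theorem (Lemma \ref{lemma: liouville}) enters: with $M$ compact, $\delbar f = 0$ forces $f$ to be constant, whence $H^0\pa{X, \omega_X^{\tensor m}} = \IC\cdot\psi^{\tensor m}$ has dimension $1$ for every $m\geq 1$. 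Substituting into the definition gives
\[ \kod(X) = \limsup_{m\to+\infty}\frac{\log\dim H^0\pa{X, \omega_X^{\tensor m}}}{\log m} = \limsup_{m\to+\infty}\frac{\log 1}{\log m} = 0. \]

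I expect the only genuinely delicate point to be the appeal to Liouville's Theorem, which is precisely where compactness of $M$ is needed (through the ellipticity of $\Delta_{\delbar}$ and the maximum principle, as recalled in the proof of Lemma \ref{lemma: liouville}); everything else is formal once one records that $\psi^{\tensor m}$ trivializes $\cK_X^{\tensor m}$ and that $\delbar\psi = 0$ is equivalent to $\alpha = 0$. Accordingly the statement is to be read with $M$ compact, in keeping with the hypothesis already required for $\kod(X)$ to be well defined.
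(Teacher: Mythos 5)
Your proposal is correct and follows essentially the same route as the paper: both reduce pseudoholomorphicity of $f\cdot\psi^{\tensor m}$ to $\delbar f = 0$ via the Leibniz-rule lemma, identify $H^0\pa{X,\omega_X^{\tensor m}}$ with the constants through Liouville's Theorem (Lemma \ref{lemma: liouville}), and conclude that all plurigenera equal $1$, hence $\kod(X) = 0$. Your explicit remark that $\delbar\psi = 0$ forces $\alpha = 0$ because $\psi$ is nowhere vanishing, and your note on where compactness enters, are small clarifications the paper leaves implicit, but the argument is the same.
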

\begin{proof}
Under the assumption that $\delbar \psi = 0$, a smooth pluricanonical section $f \cdot \psi^{\tensor m}$ is pseudoholomorphic if and only if $\delbar f = 0$. Hence
\[\begin{array}{ccc}
H^0\pa{X, \cO_X} & \longrightarrow & H^0\pa{X, \canbund_X^{\otimes m}}\\
f & \longmapsto & f \cdot \psi^{\otimes m}
\end{array}\]
is an isomorphism for every integer $m \geq 1$. By Liouville's Theorem we have then that $P_m(X) = 1$ for all $m \geq 1$, which readily implies the proposition.
\end{proof}

\begin{rem}
The converse of Proposition \ref{prop: kod = 0} does not hold in general, and a counterexample can be found in \cite[$\S$5.1]{Cattaneo-Nannicini-Tomassini-2}. There, the authors consider a family of almost complex structures $J_a$ ($a \in \IR \smallsetminus \set{0}$) on a $4$-dimensional nilmanifold $\cN$. In this example it is shown that (with our notation)
\[\alpha_a = \ii \frac{a}{4} \bar{\varphi}^2_a,\]
so $\alpha_a \neq 0$ for all $a \in \IR \smallsetminus \set{0}$, but (see \cite[Proposition 5.5]{Cattaneo-Nannicini-Tomassini-2})
\[\kod(\cN, J_a) = \left\{ \begin{array}{ll}
-\infty & \text{if } a \notin 2\pi \IQ,\\
0 & \text{if } a \in 2\pi \IQ \smallsetminus \set{0}.
\end{array} \right.\]
\end{rem}

\begin{rem}
Observe that $\psi$ is a nowhere vanishing canonical section, hence the condition $\delbar \psi = 0$ forces the canonical bundle to be pseudoholomorphically trivial, i.e., there exists a pseudoholomorphic isomorphism $\canbund_X \simeq \cO_X$. It may happen that $\canbund_X$ is not trivial, but $\canbund_X^{\tensor m} \simeq \cO_X$ for some $m \in \IZ_{\geq 2}$, e.g., this is the case of bielliptic surfaces. In this case $\alpha \neq 0$ for every real parallelism $\set{v_1, w_1, \ldots, v_n, w_n}$ as we are considering, but $\kod(X) = 0$.
\end{rem}

\subsection{The class of \texorpdfstring{$\alpha$}{alpha} in Dolbeault cohomology}

We want now to see how the $(0, 1)$-form $\alpha$ changes if one changes the parallelism.

\begin{lemma}
Let $\cP = \set{v_1, w_1, \ldots, v_n, w_n}$ and $\cP' = \set{v'_1, w'_1, \ldots, v'_n, w'_n}$ be two real parallelisms on $M$ in which the almost complex structure $J$ is in its standard form, and denote by $\set{\varphi^1, \ldots, \varphi^n}$ and $\set{{\varphi'}^1, \ldots, {\varphi'}^n}$ the corresponding coframes of $(1, 0)$-forms. Let $\alpha$ and $\alpha'$ be the $(0, 1)$-forms associated to $\cP$ and $\cP'$ respectively, as in \eqref{eq: alpha}. Then there exists a nowhere vanishing complex valued smooth function $g: X \longrightarrow \IC$ such that
\[\alpha' =  \alpha + \frac{\delbar g}{g}.\]
\end{lemma}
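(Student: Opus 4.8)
The plan is to exploit the fact that the canonical bundle $\cK_X = \bigwedge^{n,0}X$ is a complex \emph{line} bundle, so that any two nowhere vanishing global sections of it differ by a nowhere vanishing smooth function. First I would set $\psi = \varphi^1 \wedge \ldots \wedge \varphi^n$ and $\psi' = {\varphi'}^1 \wedge \ldots \wedge {\varphi'}^n$. Since the $\varphi^i$ (resp.\ the ${\varphi'}^i$) form a coframe of $(1,0)$-forms at every point, both $\psi$ and $\psi'$ are nowhere vanishing sections of $\cK_X$. Hence there is a smooth function $g: X \longrightarrow \IC$, \emph{nowhere zero}, with $\psi' = g\,\psi$; this $g$ is the function in the statement, and the fact that $g$ never vanishes is precisely what makes $\delbar g / g$ a globally defined smooth $(0,1)$-form.

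Next I would compute $\delbar \psi'$ in two ways. On the one hand, the computation of the first Lemma of this section, applied verbatim to the parallelism $\cP'$, gives $\delbar \psi' = \alpha' \tensor \psi'$, with $\alpha'$ the $(0,1)$-form attached to $\cP'$ as in \eqref{eq: alpha}. On the other hand, writing $\psi' = g\psi$ and using the Leibniz rule for $\delbar$ exactly as in Lemma~\ref{lemma: pseudohol condition} with $m = 1$, I get $\delbar \psi' = \delbar(g\psi) = \pa{\delbar g + g\alpha} \tensor \psi$, where I use the relation $\delbar \psi = \alpha \tensor \psi$.

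Finally I would equate the two expressions. Since $\psi' = g\psi$, the first one reads $\alpha' \tensor \psi' = (g\alpha') \tensor \psi$, so comparing coefficients against the nowhere vanishing section $\psi$ yields the identity $g\alpha' = \delbar g + g\alpha$ of $(0,1)$-forms. Dividing through by the nowhere vanishing function $g$ then gives $\alpha' = \alpha + \delbar g / g$, which is the claim.

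There is no genuine obstacle here: the whole argument is forced once one recognizes $\cK_X$ as a line bundle and the two canonical sections as nowhere vanishing. The only points deserving a little care are (i) confirming that $g$ is truly nowhere zero — which holds because it is the ratio of two nowhere vanishing sections of a line bundle, so that $\delbar g / g$ makes sense globally — and (ii) ensuring that the Leibniz identity invoked is the same one already used in Lemma~\ref{lemma: pseudohol condition}, namely $\delbar(g\psi) = \delbar g \tensor \psi + g\,\delbar \psi$. Both are routine, so the statement follows immediately.
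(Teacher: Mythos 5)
Your proof is correct and is essentially the paper's own argument: the paper produces $g$ explicitly as $\det(\mathcal{M})$, where $\mathcal{M}$ is the change-of-frame matrix between the two coframes of $(1,0)$-forms (which is exactly the ratio $\psi'/\psi$ that you obtain abstractly from the line-bundle structure of $\cK_X$), and then performs the same Leibniz computation and comparison of coefficients against the nowhere vanishing section. The only difference is that the paper's explicit determinant makes the nonvanishing of $g$ visible from the invertibility of $\mathcal{M}$, whereas you argue it from both sections being nowhere zero; these are the same fact in different clothing.
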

\begin{proof}
Since $M$ is parallelizable, we can write
\[{\varphi'}^j = \sum_{i = 1}^n m_{ij} \varphi^i\]
for suitable complex valued functions $m_{ij} = m_{ij}(x)$ on $M$ such that $\cM = \pa{m_{ij}}$ is a complex valued invertible matrix for all $x \in M$. It follows that
\[\psi' = {\varphi'}^1 \wedge \ldots \wedge {\varphi'}^n = \det(\cM) \cdot \varphi^1 \wedge \ldots \wedge \varphi^n = \det(\cM) \cdot \psi,\]
and so
\[\begin{array}{rl}
\delbar \psi' = & \delbar \det(\cM) \wedge \psi + \det(\cM) \delbar \psi =\\
= & \frac{\delbar \det(\cM)}{\det(\cM)} \wedge \det(\cM) \psi + \alpha \wedge \det(\cM) \psi =\\
= & \pa{\frac{\delbar \det(\cM)}{\det(\cM)} + \alpha} \wedge \psi'.
\end{array}\]
On the other hand $\delbar \psi' = \alpha' \wedge \psi'$ and so we deduce that
\[\alpha' =  \alpha + \frac{\delbar \det(\cM)}{\det(\cM)}.\]
\end{proof}

\begin{lemma}
Let $X = (M, J)$ be an almost complex manifold with $M$ real parallelizable. With the notations introduced before, we have that
\[\delbar^2 \psi = \delbar \alpha \wedge \psi.\]
In particular, if $J$ is integrable then $\delbar \alpha = 0$.
\end{lemma}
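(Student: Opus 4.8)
The plan is to differentiate the relation $\delbar\psi = \alpha \wedge \psi$, already established in the first lemma, now reading $\psi = \varphi^1 \wedge \ldots \wedge \varphi^n$ as a genuine $(n,0)$-form so that $\delbar\psi$ is the $(n,1)$-component of $d\psi$. The one structural fact I would make explicit is that $\delbar$ is an odd derivation for the wedge product. This follows from the graded Leibniz identity for $d$ by a bidegree count: since the wedge of a $(p,q)$-form with a $(p',q')$-form is $(p+p',q+q')$, decomposing $d = \mu + \del + \delbar + \mubar$ into pure bidegrees and comparing the $(p+p',q+q'+1)$-components of $d(\eta \wedge \zeta) = d\eta \wedge \zeta + (-1)^{\deg\eta}\eta \wedge d\zeta$ isolates exactly the Leibniz rule for $\delbar$ alone. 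This is the same derivation property already used implicitly to pass from $\delbar\varphi^i$ to $\delbar\psi$.

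Granting this, I would simply compute
\[\delbar^2\psi = \delbar(\alpha \wedge \psi) = \delbar\alpha \wedge \psi + (-1)^{1}\,\alpha \wedge \delbar\psi = \delbar\alpha \wedge \psi - \alpha \wedge (\alpha \wedge \psi).\]
Since $\alpha$ is a $1$-form, $\alpha \wedge \alpha = 0$, so the last term vanishes and we obtain $\delbar^2\psi = \delbar\alpha \wedge \psi$, which is the asserted identity. I expect this step to be immediate; the only care required is the sign from $\deg\alpha = 1$ and the observation that the self-wedge of an odd-degree form is zero.

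For the second assertion, I would recall that integrability of $J$ forces $\mubar = 0$; combined with the relation $\del\mubar + \delbar^2 + \mubar\del = 0$ recorded in the preliminaries, this yields $\delbar^2 = 0$ as an operator. Feeding $\delbar^2\psi = 0$ into the identity just proved gives $\delbar\alpha \wedge \psi = 0$. To extract $\delbar\alpha = 0$ I would use that $\psi$ is nowhere vanishing, so the pointwise wedge map $\beta \mapsto \beta \wedge \psi$ from $(0,2)$-forms to $(n,2)$-forms is injective: writing $\beta = \sum_{k<l}\beta_{kl}\,\bar\varphi^k \wedge \bar\varphi^l$, the products $\bar\varphi^k \wedge \bar\varphi^l \wedge \varphi^1 \wedge \ldots \wedge \varphi^n$ are linearly independent and nonzero, so $\beta \wedge \psi = 0$ forces every $\beta_{kl}$ to vanish. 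Applying this to $\beta = \delbar\alpha$ gives the conclusion.

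The main obstacle here is conceptual rather than computational: one must be comfortable that $\delbar$ satisfies the Leibniz rule even though $\delbar^2 \neq 0$ in the non-integrable setting, and must keep the bidegrees straight, identifying $\delbar\psi$ as an honest $(n,1)$-form so that $\delbar^2\psi$ lands in bidegree $(n,2)$ while the complementary pieces $\mu\,\delbar\psi$, $\del\,\delbar\psi$, $\mubar\,\delbar\psi$ either sit in disjoint bidegrees or vanish for dimension reasons. Once this bookkeeping is in place, both statements drop out of the two displays above.
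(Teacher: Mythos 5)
Your proposal is correct and follows essentially the same route as the paper: differentiate $\delbar\psi = \alpha \wedge \psi$ using the Leibniz rule, kill the term $\alpha \wedge \alpha \wedge \psi$ since $\alpha$ is a $1$-form, and in the integrable case use $\mubar = 0$ together with $\del\mubar + \delbar^2 + \mubar\del = 0$ to get $\delbar^2\psi = 0$ and conclude. The only difference is that you make explicit two points the paper leaves implicit --- the bidegree argument giving the Leibniz rule for $\delbar$ alone, and the injectivity of $\beta \mapsto \beta \wedge \psi$ on $(0,2)$-forms coming from the nowhere-vanishing of $\psi$ --- which is sound bookkeeping, not a different proof.
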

\begin{proof}
Since $\delbar \psi = \alpha \wedge \psi$ we easily deduce that
\[\delbar^2 \psi = \delbar \pa{\alpha \wedge \psi} = \delbar \alpha \wedge \psi - \alpha \wedge \delbar \psi = \delbar \alpha \wedge \psi - \alpha \wedge (\alpha \wedge \psi) = \delbar \alpha \wedge \psi.\]
If $J$ is integrable we have $\delbar^2 \psi = 0$, so $\delbar \alpha \wedge \psi = 0$ from which $\delbar \alpha = 0$.
\end{proof}

From the previous lemma, it follows that in the integrable case $\delbar \alpha$ defines the class $[\delbar \alpha] = 0 \in H^{0, 2}_{\delbar}(X)$. We want to show that this is still true even in the non-integrable case.

\begin{prop}
Let $X = (M, J)$ be an almost complex manifold with $M$ real parallelizable. With the same notations as above, we have
\[[\delbar \alpha]_{\Dol} = 0 \in H^{0, 2}_{\Dol}(X).\]
\end{prop}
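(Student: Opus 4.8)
The plan is to exhibit the $\mubar$-cohomology class of $\delbar\alpha$ as lying in the image of the operator $\delbar\colon H^{0,1}_{\mubar}(X) \to H^{0,2}_{\mubar}(X)$ induced on $\mubar$-cohomology, since this is exactly the condition for $[\delbar\alpha]_{\Dol}$ to vanish in $H^{0,2}_{\Dol}(X)$. The natural candidate for a preimage is the class $[\alpha]_{\mubar}$ itself, and the whole argument reduces to checking that $\alpha$ genuinely represents a class in $\mubar$-cohomology and that the induced $\delbar$ sends that class to $[\delbar\alpha]_{\mubar}$.

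First I would observe that $\alpha$ is a globally defined form of type $(0,1)$ and that $\mubar$ lowers the holomorphic degree by one, so it vanishes identically on the whole $p = 0$ column. In particular $\mubar\alpha = 0$, so $\alpha$ defines a class $[\alpha]_{\mubar} \in H^{0,1}_{\mubar}(X)$. By the very definition of the operator induced by $\delbar$ on $\mubar$-cohomology we then have $\delbar[\alpha]_{\mubar} = [\delbar\alpha]_{\mubar}$, which already displays $[\delbar\alpha]_{\mubar}$ as an element of the image of $\delbar\colon H^{0,1}_{\mubar}(X) \to H^{0,2}_{\mubar}(X)$.

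Next I would verify that $\delbar\alpha$ does define a Dolbeault class, so that the statement makes sense. Since $\delbar\alpha$ is again of type $(0,2)$, the same degree reason gives $\mubar(\delbar\alpha) = 0$, so $[\delbar\alpha]_{\mubar}$ is a well-defined element of $H^{0,2}_{\mubar}(X)$; and the identity $\delbar^2 = 0$ on $\mubar$-cohomology, which comes from the relation $\del\mubar + \delbar^2 + \mubar\del = 0$ (this forces $\delbar^2\alpha = -\mubar\del\alpha$, an $\mubar$-exact form), guarantees that $[\delbar\alpha]_{\mubar}$ lies in the kernel of $\delbar\colon H^{0,2}_{\mubar}(X) \to H^{0,3}_{\mubar}(X)$. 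Combining this with the previous paragraph, $[\delbar\alpha]_{\mubar}$ is both $\delbar$-closed and $\delbar$-exact at the level of $\mubar$-cohomology, whence $[\delbar\alpha]_{\Dol} = 0$.

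The computation is entirely formal, and I do not expect a genuine analytic obstacle; the only point requiring care is the bidegree bookkeeping, namely the repeated use of the fact that $\mubar$ annihilates everything in the $p = 0$ column. The real content is the structural observation that $\delbar\alpha$ is manifestly $\delbar$ of the globally defined $(0,1)$-form $\alpha$, which is what upgrades the integrable statement $[\delbar\alpha] = 0$ in the ordinary Dolbeault group to the vanishing of $[\delbar\alpha]_{\Dol}$ in the generalized (Cirici--Wilson) Dolbeault cohomology.
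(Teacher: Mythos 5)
Your proof is correct and follows essentially the same route as the paper: both exhibit $[\delbar\alpha]_{\mubar}$ as $\delbar[\alpha]_{\mubar}$, hence $\delbar$-exact at the level of $\mubar$-cohomology, after checking that the relevant $\mubar$-classes are well defined. The only cosmetic differences are that you deduce $\mubar(\delbar\alpha)=0$ from bidegree reasons while the paper uses the anticommutation $\delbar\mubar+\mubar\delbar=0$, and you make explicit the (automatic) fact that $\delbar$-exact classes are $\delbar$-closed in $H^{\bullet,\bullet}_{\mubar}(X)$, which the paper leaves implicit.
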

\begin{proof}
Observe that since $\alpha$ is a $(0, 1)$-form, then $\mubar \alpha = 0$ for bidegree reasons and so the class $[\alpha]_{\mubar}$ is well defined in $H^{0, 1}_{\mubar}(X)$. Observe that since $\delbar \mubar + \mubar \delbar = 0$ we have also that $\mubar \delbar \alpha = -\delbar \mubar \alpha = 0$ and so also $[\delbar \alpha]_{\mubar} \in H^{0, 2}_{\mubar}(X)$ is well defined. But then
\[[\delbar \alpha]_{\mubar} = \delbar [\alpha]_{\mubar}\]
and so $[\delbar \alpha]_{\Dol} = 0 \in H^{0, 2}_{\Dol}(X)$.
\end{proof}

\section{A PDE system}

As we saw in Lemma \ref{lemma: pseudohol condition}, the condition that $\delbar\pa{f \psi^{\tensor m}} = 0$ is equivalent to
\[\delbar f + mf \alpha = 0.\]
This condition (which is \eqref{eq: pseudohol cond 1}) is equivalent to the system of equations
\begin{equation}\label{eq: pseudohol cond 2}
\bar{\cX}_k(f) - m \alpha_k f = 0, \qquad k = 1, \ldots, n,
\end{equation}
and we want to write this last condition from the real point of view. To this end we write
\[f = u + \ii v,\]
and so we see that \eqref{eq: pseudohol cond 2} is equivalent to the system
\begin{equation}\label{eq: pseudohol cond 3}
\begin{array}{l}
v_k(u) - w_k(v) = 2m \pa{\Re\pa{\alpha_k} u - \Im\pa{\alpha_k} v}\\
v_k(v) + w_k(u) = 2m \pa{\Re\pa{\alpha_k} v + \Im\pa{\alpha_k} u}
\end{array}, \qquad k = 1, \ldots, n.
\end{equation}

\begin{rem}\label{rem: symmetry of solutions}
Observe that if $(u, v)$ is a solution of \eqref{eq: pseudohol cond 3}, then $(-v, u)$ is also a solution. From the complex point of view, this corresponds to the fact that the space of solutions of \eqref{eq: pseudohol cond 3} is closed under multiplication by $\ii$ (i.e., if $f$ is a solution, then $\ii \cdot f$ is also a solution).
\end{rem}

From \eqref{eq: pseudohol cond 3} one sees that
{\renewcommand*{\arraystretch}{1.5}
\[\begin{array}{rl}
v_k(w_k(u)) = & -v_k(v_k(v)) + 2m \Re\pa{\alpha_k} v_k(v) + 2m \Im\pa{\alpha_k} w_k(v) +\\
 & + 2m \pa{v_k\pa{\Re\pa{\alpha_k}} - 2m \pa{\Im\pa{\alpha_k}}^2} v +\\
 & + 2m \pa{v_k\pa{\Im\pa{\alpha_k}} + 2m \Re\pa{\alpha_k} \Im\pa{\alpha_k}} u,\\
w_k(v_k(u)) = & w_k(w_k(v)) - 2m \Re\pa{\alpha_k} v_k(v) - 2m \Im\pa{\alpha_k} w_k(v) +\\
 & + 2m \pa{2m \pa{\Re\pa{\alpha_k}}^2 - w_k\pa{\Im\pa{\alpha_k}}} v +\\
 & + 2m \pa{w_k\pa{\Re\pa{\alpha_k}} + 2m \Re\pa{\alpha_k} \Im\pa{\alpha_k}} u,
\end{array}\]}
and so
\begin{equation}\label{eq: commutator 1}
{\renewcommand*{\arraystretch}{1.5}
\begin{array}{rl}
[w_k, v_k](u) = & v_k(v_k(v)) + w_k(w_k(v)) +\\
 & - 4m \Re\pa{\alpha_k} v_k(v) - 4m \Im\pa{\alpha_k} w_k(v) +\\
 & - 2m \pa{v_k\pa{\Re\pa{\alpha_k}} + w_k\pa{\Im\pa{\alpha_k}} - 2m \abs{\alpha_k}^2} v +\\
 & - 2m \pa{v_k\pa{\Im\pa{\alpha_k}} - w_k\pa{\Re\pa{\alpha_k}}} u.
\end{array}}
\end{equation}

\begin{rem}
Observe that
\[v_k\pa{\Re\pa{\alpha_k}} + w_k\pa{\Im\pa{\alpha_k}} = 2 \cdot \Re\pa{\cX_k \pa{\alpha_k}}, \, v_k\pa{\Im\pa{\alpha_k}} - w_k\pa{\Re\pa{\alpha_k}} = 2 \cdot \Im\pa{\cX_k \pa{\alpha_k}},\]
hence the coefficient of $u$ in \eqref{eq: commutator 1} can be written as
\begin{equation}\label{eq: coeff 1}
-4m \cdot \Im\pa{\cX_k \pa{\alpha_k}}.
\end{equation}
\end{rem}

Observe that we can express
\[[w_k, v_k] = 2 \ii [\cX_k, \bar{\cX}_k] = 2 \sum_{i = 1}^n \Im(\lambda^i_{kk}) v_i - \Re(\lambda^i_{kk}) w_i,\]
and so, using \eqref{eq: pseudohol cond 3}, we obtain that
\begin{equation}\label{eq: commutator 2}
\begin{array}{rl}
[w_k, v_k](u) = & 2 \sum_{i = 1}^n (\Im(\lambda^i_{k, k}) \pa{w_i(v) + 2m \Re(\alpha_i)u - 2m \Im(\alpha_i)v} +\\
 & - \Re(\lambda^i_{kk}) \pa{-v_i(v) + 2m \Re(\alpha_i)v + 2m \Im(\alpha_i)u}) =\\
= & \sum_{i = 1}^n (2\Re(\lambda^i_{kk}) v_i(v) + 2\Im(\lambda^i_{kk}) w_i(v) +\\
 & + 4m \pa{\Im(\lambda^i_{kk}) \Re(\alpha_i) - \Re(\lambda^i_{kk}) \Im(\alpha_i)} u +\\
 & - 4m \pa{\Im(\lambda^i_{kk}) \Im(\alpha_i) + \Re(\lambda^i_{kk}) \Re(\alpha_i)} v) =\\
= & \sum_{i = 1}^n (2\Re(\lambda^i_{kk}) v_i(v) + 2\Im(\lambda^i_{kk}) w_i(v) +\\
 & + 4m \Im(\lambda^i_{kk} \bar{\alpha}_i) u  - 4m \Re(\lambda^i_{kk} \bar{\alpha}_i) v).
\end{array}
\end{equation}

Comparing \eqref{eq: commutator 1} and \eqref{eq: commutator 2} we have then the relation
\begin{equation}\label{eq: main rel}
\begin{array}{l}
v_k(v_k(v)) + w_k(w_k(v)) +\\
-4m\Re(\alpha_k) v_k(v) - 4m\Im(\alpha_k)w_k(v) +\\
- 2\sum_{i = 1}^n \Re(\lambda^i_{kk}) v_i(v) - 2 \sum_{i = 1}^n \Im(\lambda^i_{kk}) w_i(v) +\\
- 4m \pa{\Re\pa{\cX_k(\alpha_k)} - m|\alpha_k|^2 - \sum_{i = 1}^n \Re(\lambda^i_{kk} \bar{\alpha}_i)} v +\\
- 4m \pa{\Im\pa{\cX_k(\alpha_k)} + \sum_{i = 1}^n \Im(\lambda^i_{kk} \bar{\alpha}_i)} u = 0,
\end{array}
\end{equation}
and, analogously, the `twin' relation
\[\begin{array}{l}
-v_k(v_k(u)) - w_k(w_k(u)) +\\
+4m\Re(\alpha_k) v_k(u) + 4m\Im(\alpha_k)w_k(u) +\\
+ 2\sum_{i = 1}^n \Re(\lambda^i_{kk}) v_i(u) + 2 \sum_{i = 1}^n \Im(\lambda^i_{kk}) w_i(u) +\\
+ 4m \pa{\Re\pa{\cX_k(\alpha_k)} - m|\alpha_k|^2 - \sum_{i = 1}^n \Re(\lambda^i_{kk} \bar{\alpha}_i)} u +\\
- 4m \pa{\Im\pa{\cX_k(\alpha_k)} + \sum_{i = 1}^n \Im(\lambda^i_{kk} \bar{\alpha}_i)} v = 0.
\end{array}\]

Take now the sum over $k$ of \eqref{eq: main rel} and the twin relation. Focusing on the first sum, the coefficient of $u$ is
\begin{equation}\label{eq: coeff u}
-4m \Im\pa{\sum_{k = 1}^n \pa{\cX_k(\alpha_k) + \sum_{i = 1}^n \lambda^i_{kk} \bar{\alpha}_i}}
\end{equation}
while the coefficient of $v$ is
\begin{equation}\label{eq: coeff v}
-4m \Re \pa{\sum_{k = 1}^n \pa{\cX_k(\alpha_k) - m |\alpha_k|^2 - \sum_{i = 1}^n \lambda^i_{kk} \bar{\alpha}_i}}.
\end{equation}

\begin{rem}
If we consider the sum of the twin relations, the coefficient of $v$ is the opposite of \eqref{eq: coeff u} and the coefficient of $u$ equals \eqref{eq: coeff v}.
\end{rem}

To simplify the notation we set
\begin{equation}\label{eq: beta}
\beta = \sum_{k = 1}^n \pa{\cX_k(\alpha_k) + \sum_{i = 1}^n \lambda^i_{kk} \bar{\alpha}_i},
\end{equation}
then $\beta$ is a smooth complex valued function on our manifold $M$. Moreover, we set
\[\begin{array}{rl}
L_m(\blank) = & \sum_{k = 1}^n \left( v_k(v_k(\blank)) + w_k(w_k(\blank)) + \right.\\
 & -4m\Re(\alpha_k) v_k(\blank) - 4m\Im(\alpha_k)w_k(\blank) +\\
 & \left. - 2\sum_{i = 1}^n \Re(\lambda^i_{kk}) v_i(\blank) - 2 \sum_{i = 1}^n \Im(\lambda^i_{kk}) w_i(\blank) \right).
\end{array}\]

\begin{rem}
The operator $L_m$ is a second order elliptic operator on $M$.
\end{rem}

With these notations, we want to solve the system
\begin{equation}\label{eq: final system}
\left\{ \begin{array}{l}
L_m(v) - 4m \Re \pa{\sum_{k = 1}^n \pa{\cX_k(\alpha_k) - m |\alpha_k|^2 - \sum_{i = 1}^n \lambda^i_{kk} \bar{\alpha}_i}} v - 4m \Im(\beta) u = 0\\
-L_m(u) + 4m \Re \pa{\sum_{k = 1}^n \pa{\cX_k(\alpha_k) - m |\alpha_k|^2 - \sum_{i = 1}^n \lambda^i_{kk} \bar{\alpha}_i}} u - 4m \Im(\beta) v = 0.
\end{array} \right.
\end{equation}

\begin{prop}\label{prop: system decouples}
Let $\beta$ be as in \eqref{eq: beta}. If $\beta$ is a \emph{real} valued function, then system \eqref{eq: final system} decouples, and $u$ and $v$ satisfy the same second order elliptic equation.
\end{prop}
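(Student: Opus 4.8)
The plan is to read off that the only coupling between $u$ and $v$ in \eqref{eq: final system} sits in the two terms carrying a factor of $\Im(\beta)$. Indeed, comparing with \eqref{eq: coeff u} one sees that the coefficient of $u$ in the first equation is precisely $-4m\,\Im(\beta)$, and by symmetry (see the remark following \eqref{eq: coeff v}) the coefficient of $v$ in the second equation is again $-4m\,\Im(\beta)$. Every other term in the first equation involves only $v$ (through $L_m(v)$ and a zeroth order multiple of $v$) and every other term in the second involves only $u$. Hence the hypothesis that $\beta$ is real valued, i.e.\ $\Im(\beta) = 0$, is exactly the condition that annihilates both off-diagonal coupling terms.

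Once these terms are dropped, I would abbreviate
\[c_m = 4m\,\Re\pa{\sum_{k=1}^n \pa{\cX_k(\alpha_k) - m\abs{\alpha_k}^2 - \sum_{i=1}^n \lambda^i_{kk}\bar{\alpha}_i}},\]
so that \eqref{eq: final system} reduces to the pair $L_m(v) - c_m\, v = 0$ and $-L_m(u) + c_m\, u = 0$. The first equation now involves only $v$ and the second only $u$, which is the asserted decoupling. Multiplying the second equation by $-1$ turns it into $L_m(u) - c_m\, u = 0$, so $u$ and $v$ are both solutions of the single scalar equation $L_m(\cdot) - c_m(\cdot) = 0$.

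It remains to observe that this common operator is second order elliptic: it differs from $L_m$ only by the zeroth order multiplication $f \mapsto c_m f$, which leaves the principal symbol unchanged, and $L_m$ is elliptic by the remark preceding the statement. I expect no real obstacle here; the entire content of the proposition is the clean bookkeeping observation that reality of $\beta$ kills exactly the coupling coefficients, converting a coupled $2 \times 2$ system into two copies of one and the same elliptic equation.
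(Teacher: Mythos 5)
Your proof is correct and is essentially the paper's own argument: the paper's proof simply notes that the claim is obvious once one imposes $\Im(\beta) = 0$ in \eqref{eq: final system}, which is exactly your observation that the only coupling terms carry the factor $\Im(\beta)$. Your additional remarks --- multiplying the second equation by $-1$ to get the common equation $L_m(\cdot) - c_m(\cdot) = 0$, and noting that zeroth order terms do not affect the principal symbol, hence ellipticity --- are just the bookkeeping the paper leaves implicit.
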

\begin{proof}
This is obvious once we impose the condition $\Im(\beta) = 0$ in \eqref{eq: final system}.
\end{proof}

\begin{thm}\label{thm: constant solution}
Let $\beta$ be as in \eqref{eq: beta}. If $\beta$ is a \emph{real} valued function and
\[\Re \pa{\sum_{k = 1}^n \pa{\cX_k(\alpha_k) - \sum_{i = 1}^n \lambda^i_{kk} \bar{\alpha}_i}} \geq m \sum_{k = 1}^n |\alpha_k|^2,\]
then any solution of \eqref{eq: final system} is constant.
\end{thm}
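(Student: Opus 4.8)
The plan is to collapse the system to a single scalar elliptic equation and then to run the strong maximum principle, the sign hypothesis being exactly what gives the zeroth-order coefficient of the relevant operator the sign that the maximum principle needs. Concretely, I would first invoke Proposition \ref{prop: system decouples}: as $\beta$ is real valued we have $\Im(\beta)=0$, so \eqref{eq: final system} splits into the two identical scalar equations
\[
L_m(u) - 4m\,c\,u = 0, \qquad L_m(v) - 4m\,c\,v = 0,
\]
where $c := \Re\pa{\sum_{k=1}^n\pa{\cX_k(\alpha_k) - m\abs{\alpha_k}^2 - \sum_{i=1}^n\lambda^i_{kk}\bar\alpha_i}}$. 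The hypothesis $\Re\pa{\sum_{k=1}^n\pa{\cX_k(\alpha_k) - \sum_{i=1}^n\lambda^i_{kk}\bar\alpha_i}} \geq m\sum_{k=1}^n\abs{\alpha_k}^2$ says precisely that $c \geq 0$ at every point of $M$, so it suffices to show that any solution of $\tilde{L}\,u := L_m(u) - 4m\,c\,u = 0$ is constant.

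Next I would record the structure of $\tilde L$. Its principal part is $\sum_{k=1}^n\pa{v_k v_k + w_k w_k}$, whose symbol is positive definite because $\set{v_1, w_1, \ldots, v_n, w_n}$ is a frame; thus $\tilde L$ is elliptic, as already remarked for $L_m$, and uniformly so since $M$ is compact. Crucially, every summand of $L_m$ differentiates its argument at least once, so $L_m$ carries no zeroth-order term and the zeroth-order coefficient of $\tilde L$ is exactly $-4m\,c$, which is $\leq 0$ because $m \geq 1$ and $c \geq 0$.

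The core of the argument is then the strong maximum principle for $\tilde L u = 0$ on the compact connected manifold $M$, where every point is interior. A solution is simultaneously a sub- and a supersolution, and I would split into cases on the sign of $\max_M u$. If $\max_M u \geq 0$, Hopf's strong maximum principle, valid for elliptic operators with nonpositive zeroth-order coefficient attaining a nonnegative interior maximum, forces $u \equiv \max_M u$. If instead $\max_M u < 0$, then $\min_M u < 0$, and applying the same principle to $-u$, which solves $\tilde L(-u)=0$ and attains the strictly positive maximum $-\min_M u$, yields $u \equiv \min_M u$. Either way $u$ is constant; the identical reasoning gives $v$ constant, so $f = u + \ii v$ is constant and every solution of \eqref{eq: final system} is constant.

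The step I expect to be the main obstacle is the bookkeeping in the maximum principle rather than any computation: since the zeroth-order coefficient is only nonpositive and not strictly negative, the classical statement forces rigidity only at a nonnegative maximum, respectively a nonpositive minimum, so one must carry out the case distinction above and examine the minimum when $u$ happens to be everywhere negative. The hypothesis $c \geq 0$ is exactly what this requires; were $c$ allowed to change sign the zeroth-order coefficient would lose its definite sign and the conclusion would genuinely fail, in line with the non-constant solutions responsible for positive Kodaira dimension in the examples.
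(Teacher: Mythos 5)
Your proposal is correct and follows essentially the same route as the paper: decouple the system via Proposition \ref{prop: system decouples}, observe that the hypothesis makes the zeroth-order coefficient of the resulting scalar elliptic equation nonpositive, and conclude by the maximum principle with the same case distinction on the sign of the maximum (passing to $-u$ when the maximum is negative). Your write-up is merely more explicit than the paper's about why the sign hypothesis matches the requirements of Hopf's strong maximum principle.
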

\begin{proof}
By Proposition \ref{prop: system decouples} the system \eqref{eq: final system} decouples and $u$, $v$ satisfy the same second order elliptic equation. As a consequence we only need to prove that any solution $v$ of
\[L_m(v) - 4m \Re \pa{\sum_{k = 1}^n \pa{\cX_k(\alpha_k) - m |\alpha_k|^2 - \sum_{i = 1}^n \lambda^i_{kk} \bar{\alpha}_i}} v = 0\]
is constant. Since the manifold $M$ is compact, any solution $v$ attains its maximum $C$ and minimum $c$. Moreover, $-v$ is also a solution with maximun $-c$ and minimum $-C$. If $C \geq 0$, then $v$ must be constant by the maximum principle and our hypothesis. If $C < 0$, then we have $-c \geq -C > 0$ and so by the maximum principle again on $-v$ we deduce that $-v$, hence $v$, is constant.
\end{proof}

\begin{cor}\label{cor: h^0}
With the same hypothesis as Theorem \eqref{thm: constant solution} we have that
\[H^0(X, \canbund_X^{\tensor m}) = \left\{ \begin{array}{ll}
0 & \text{if } \alpha \neq 0,\\
\IC & \text{if } \alpha = 0.
\end{array} \right.\]
\end{cor}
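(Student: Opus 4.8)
The plan is to identify $H^0(X,\omega_X^{\tensor m})$ with the solution space of the first-order equation \eqref{eq: pseudohol cond 1} and then to feed Theorem \ref{thm: constant solution} into it. Since $\psi = \varphi^1 \wedge \ldots \wedge \varphi^n$ is built from a global coframe, it is a nowhere-vanishing smooth section of $\cK_X$, and hence so is $\psi^{\tensor m}$; therefore every smooth pluricanonical section can be written uniquely as $f \cdot \psi^{\tensor m}$ with $f \colon M \longrightarrow \IC$ smooth. By Lemma \ref{lemma: pseudohol condition}, the section $f \cdot \psi^{\tensor m}$ is pseudoholomorphic exactly when $\delbar f + mf\alpha = 0$, equivalently when the pair $(u,v) = (\Re f, \Im f)$ solves the real system \eqref{eq: pseudohol cond 3}. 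Thus $H^0(X,\omega_X^{\tensor m})$ is naturally identified with the space of such $f$.

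First I would note that every solution of \eqref{eq: pseudohol cond 3} is, \emph{a fortiori}, a solution of the derived second-order system \eqref{eq: final system}, since the latter was obtained from the former purely by differentiation and the commutator computation leading to \eqref{eq: main rel}. Under the standing hypotheses of Theorem \ref{thm: constant solution} (namely that $\beta$ as in \eqref{eq: beta} is real and that the displayed positivity bound holds), that theorem forces both $u$ and $v$, hence $f$ itself, to be constant.

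The decisive step is then to substitute the constant $f$ back into the genuine pseudoholomorphic equation \eqref{eq: pseudohol cond 1}: since $\delbar f = 0$ for a constant function, the equation collapses to $m f \alpha = 0$, with $m \geq 1$. At this point the stated dichotomy appears. If $\alpha \neq 0$, I would pick a point where $\alpha$ does not vanish; the constancy of $f$ and $m \geq 1$ then force $f = 0$ there and hence everywhere, so $H^0(X,\omega_X^{\tensor m}) = 0$. If instead $\alpha = 0$, then every constant $f$ solves the equation, and the map $f \longmapsto f \cdot \psi^{\tensor m}$ furnishes an isomorphism $H^0(X,\omega_X^{\tensor m}) \simeq \IC$ (consistent with Liouville's Theorem, Lemma \ref{lemma: liouville}).

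I expect the only delicate point to be the bookkeeping of the logical direction. Theorem \ref{thm: constant solution} only controls solutions of the \emph{second-order} system \eqref{eq: final system}, so one must take care to return to the stronger \emph{first-order} equation \eqref{eq: pseudohol cond 1} at the end, in order to read off which constants actually yield pseudoholomorphic sections; it is precisely this last return that produces the split into the two cases. The reduction is otherwise formal, the substantive analytic content (ellipticity, the maximum principle, and the compactness of $M$) having already been absorbed into Theorem \ref{thm: constant solution}.
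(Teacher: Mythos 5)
Your proposal is correct and follows essentially the same route as the paper: invoke Theorem \ref{thm: constant solution} (noting that solutions of the first-order system \eqref{eq: pseudohol cond 3} are in particular solutions of the derived second-order system \eqref{eq: final system}) to force $f$ constant, then substitute back into the first-order equation to get the dichotomy; the paper phrases this last step as a $2\times 2$ real linear system in $(u,v)$ with determinant $|\alpha_k|^2$, whereas you use the equivalent complex form $mf\alpha = 0$, and for $\alpha = 0$ it cites Proposition \ref{prop: kod = 0} just as you invoke Liouville's Theorem.
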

\begin{proof}
We know by Theorem \ref{thm: constant solution} that the only solutions to \eqref{eq: final system}, hence to \eqref{eq: pseudohol cond 3}, are constants. Moreover \eqref{eq: pseudohol cond 3} becomes
\[\begin{array}{l}
\Re\pa{\alpha_k} u - \Im\pa{\alpha_k} v = 0\\
\Re\pa{\alpha_k} v + \Im\pa{\alpha_k} u = 0
\end{array}, \qquad k = 1, \ldots, n.
\]
Assume that $\alpha \neq 0$. This means that there is at least a coefficient $\alpha_k$ which is non-zero at some point $x \in M$. By easy linear algebra we have
\[\left\{ \begin{array}{l}
\Re\pa{\alpha_k(x)} u - \Im\pa{\alpha_k(x)} v = 0\\
\Re\pa{\alpha_k(x)} v + \Im\pa{\alpha_k(x)} u = 0
\end{array} \right. \Longrightarrow u = v = 0,
\]
and so $H^0(X, \canbund_X^{\tensor m}) = 0$. If $\alpha = 0$, then we already observed in Proposition \ref{prop: kod = 0} that $H^0(X, \canbund_X^{\tensor m}) = \IC$.
\end{proof}

\subsection{Parallelizable manifolds which are quotients of Lie groups}

In this section we want to investigate more in detail the case where the underlying differentiable manifold $M$ is a quotient of a real Lie group by the action of a discrete cocompact subgroup, i.e., by a lattice.

Assume that the manifold $M$ is the quotient of a (real) Lie group $G$ by a lattice, and that the fields $v_1, w_1, \ldots, v_n, w_n$ are obtained as left invariant vector fields on $G$. By \cite[Proposition 2.5]{Wolf1} all the $\lambda^i_{jk}$'s appearing in \eqref{eq: def lambda} are constant, hence expression \eqref{eq: beta} simplifies to
\[\beta = \sum_{i, k = 1}^n \lambda^i_{kk} \bar{\alpha}_i,\]
and system \eqref{eq: final system} takes the easier form
\begin{equation}\label{eq: final system easy}
\left\{ \begin{array}{l}
L_m(v) + \pa{4m^2 \sum_{k = 1}^n |\alpha_k|^2 + 4m \Re(\beta)} v - 4m \Im(\beta) u = 0\\
-L_m(u) - \pa{4m^2 \sum_{k = 1}^n |\alpha_k|^2 + 4m \Re(\beta)} u - 4m \Im(\beta) v = 0.
\end{array} \right.
\end{equation}

Observe that in this case $\beta$ is a \emph{constant} function, i.e., we can assume that $\beta \in \IC$. The general results obtained in the previous section can then be simplified as follows.

\begin{prop}[cf.\ Proposition \ref{prop: system decouples}]
If $\beta \in \IR$, then system \eqref{eq: final system easy} decouples, and $u$ and $v$ satisfy the same second order elliptic equation.
\end{prop}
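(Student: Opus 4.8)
The plan is to mirror the proof of Proposition~\ref{prop: system decouples} in the constant-coefficient setting, since the statement is the specialization of that general result to the case where the manifold is a quotient of a Lie group and the structure constants $\lambda^i_{jk}$ are constant. First I would recall that under the hypothesis that $M = G/\Gamma$ with left-invariant parallelism, the coefficients $\lambda^i_{jk}$ are constant by \cite[Proposition 2.5]{Wolf1}, and consequently $\beta$ as defined in \eqref{eq: beta} reduces to the constant $\beta = \sum_{i, k = 1}^n \lambda^i_{kk} \bar{\alpha}_i$, and system \eqref{eq: final system} takes the simplified form \eqref{eq: final system easy}.

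The core of the argument is to impose the condition $\beta \in \IR$, i.e.\ $\Im(\beta) = 0$. Under this assumption I would substitute $\Im(\beta) = 0$ directly into \eqref{eq: final system easy}. The two equations then lose the cross-terms $-4m\Im(\beta)u$ and $-4m\Im(\beta)v$ which couple $u$ and $v$, leaving
\[
L_m(v) + \pa{4m^2 \sum_{k = 1}^n |\alpha_k|^2 + 4m \Re(\beta)} v = 0
\]
and the analogous equation with $u$ in place of $v$ (up to an overall sign, which does not affect the kernel). Thus $u$ and $v$ each satisfy the \emph{same} second order operator, whose ellipticity was already recorded for $L_m$ in the remark preceding \eqref{eq: final system}; the added zeroth-order term with constant coefficient does not affect the ellipticity.

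Since this is precisely the specialization of Proposition~\ref{prop: system decouples} to the constant-$\beta$ situation, I expect essentially no genuine obstacle: the entire content is the observation that setting $\Im(\beta) = 0$ removes the coupling. The only point requiring minimal care is to confirm that the zeroth-order coefficient multiplying $v$ (respectively $u$) is the same real constant $4m^2 \sum_{k} |\alpha_k|^2 + 4m\Re(\beta)$ in both equations, so that the two decoupled equations are literally identical as elliptic equations; this is immediate from the explicit form of \eqref{eq: final system easy}. Hence the proof is a one-line verification analogous to the proof of Proposition~\ref{prop: system decouples}, and I would present it as such.
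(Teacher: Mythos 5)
Your proposal is correct and coincides with the paper's own argument: the paper treats this proposition as an immediate specialization of Proposition \ref{prop: system decouples}, whose entire proof is that imposing $\Im(\beta) = 0$ in the system removes the coupling terms, after which (multiplying the second equation by $-1$) both $u$ and $v$ satisfy the same second order elliptic equation. Your additional checks — that $\cX_k(\alpha_k)=0$ because the $\lambda^i_{jk}$ are constant by \cite[Proposition 2.5]{Wolf1}, so the zeroth-order coefficient is the constant $4m^2\sum_k|\alpha_k|^2 + 4m\Re(\beta)$ in both equations — are exactly the simplifications the paper records in the text preceding the statement.
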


\begin{thm}[cf.\ Theorem \ref{thm: constant solution}]
If $\beta \in \IR$ and
\[\beta + m \sum_{k = 1}^n |\alpha_k|^2 \leq 0,\]
then any solution of \eqref{eq: final system easy} is constant.
\end{thm}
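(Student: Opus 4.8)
The plan is to obtain this statement as the lattice-quotient specialization of Theorem \ref{thm: constant solution}, by checking that both of its hypotheses hold here and that the decoupled equation is literally the one treated there.

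First I would record the simplification forced by the assumption that $v_1, w_1, \ldots, v_n, w_n$ are left invariant. As already noted in this subsection, all the $\lambda^i_{jk}$ are then constant, hence so are the coefficients $\alpha_k = \sum_i \lambda^i_{ik}$, and therefore $\cX_k(\alpha_k) = 0$ for every $k$. Consequently the quantity governing the sign condition in Theorem \ref{thm: constant solution},
\[\Re\pa{\sum_{k=1}^n \pa{\cX_k(\alpha_k) - \sum_{i=1}^n \lambda^i_{kk}\bar{\alpha}_i}},\]
reduces to $-\Re\pa{\sum_{i,k=1}^n \lambda^i_{kk}\bar{\alpha}_i} = -\Re(\beta) = -\beta$, the final equality using that $\beta$ is real.

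Next I would match the two hypotheses. The assumption $\beta \in \IR$ is exactly the reality of $\beta$ that triggers the decoupling of Proposition \ref{prop: system decouples}. Rearranging the standing inequality $\beta + m\sum_{k=1}^n |\alpha_k|^2 \leq 0$ as $-\beta \geq m\sum_{k=1}^n |\alpha_k|^2$ and invoking the previous paragraph, this is precisely the inequality required in Theorem \ref{thm: constant solution}. Since \eqref{eq: final system easy} is nothing but \eqref{eq: final system} after substituting $\cX_k(\alpha_k) = 0$ and $\beta = \sum_{i,k=1}^n \lambda^i_{kk}\bar{\alpha}_i$, that theorem applies verbatim and yields the constancy of every solution.

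For a self-contained argument I would instead run the maximum principle directly on the constant-coefficient equation. Setting $\Im(\beta) = 0$ decouples \eqref{eq: final system easy} into the single equation
\[L_m(v) + 4m\pa{m\sum_{k=1}^n |\alpha_k|^2 + \beta} v = 0\]
for each of $u$ and $v$, whose zeroth-order coefficient $4m\pa{m\sum_{k=1}^n |\alpha_k|^2 + \beta}$ is a nonpositive constant by hypothesis. Since $L_m$ is second-order elliptic with principal part $\sum_k(v_k^2 + w_k^2)$ and nonpositive zeroth-order term, the Hopf maximum principle applies; as $M$ is compact, $v$ attains its extrema, and applying the principle to whichever of $v$ or $-v$ has a nonnegative maximum (at least one always does) forces it, and hence $v$, to be constant. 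I do not anticipate a genuine obstacle: once the constancy of the $\lambda^i_{jk}$ is in hand the result is a routine specialization, the only delicate point being the sign bookkeeping in passing from \eqref{eq: final system} to \eqref{eq: final system easy}.
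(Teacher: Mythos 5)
Your proposal is correct and takes essentially the same route as the paper: the paper states this theorem without a separate proof, precisely because it is the specialization of Theorem \ref{thm: constant solution} obtained from the constancy of the $\lambda^i_{jk}$ (hence $\cX_k(\alpha_k) = 0$ and $\beta = \sum_{i,k=1}^n \lambda^i_{kk}\bar{\alpha}_i$), which is exactly your first argument, sign bookkeeping included. Your self-contained maximum-principle argument is also not a different route, since it reproduces, in the constant-coefficient case, the paper's own proof of Theorem \ref{thm: constant solution} (apply the maximum principle to whichever of $v$ or $-v$ has nonnegative maximum on the compact manifold $M$).
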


\subsection{Some remarks}

Let us return to the general setting, where $X = (M, J)$ is an almost complex manifold with $M$ real parallelizable. Assume that the function $\beta$ of \eqref{eq: beta} is real valued, so that the system \eqref{eq: final system} decouples. We want to focus on the extra condition of Theorem \ref{thm: constant solution}, namely that
\begin{equation}\label{eq: condition}
\Re \pa{\sum_{k = 1}^n \pa{\cX_k(\alpha_k) - \sum_{i = 1}^n \lambda^i_{kk} \bar{\alpha}_i}} \geq m \sum_{k = 1}^n |\alpha_k|^2.
\end{equation}

If $\alpha = 0$ then we showed in Proposition \ref{prop: kod = 0} that $H^0(X, \canbund_X^{\tensor m}) \simeq \IC$ for all $m \geq 1$ \emph{independently} of the fact that this extra condition is satisfied or not. On the contrary, if $\alpha \neq 0$ such condition is essential to conclude that $H^0(X, \canbund_X^{\tensor m}) = 0$ in Corollary \ref{cor: h^0}.

We now want to discuss a bit on this condition. Assume that $\alpha \neq 0$: then $\alpha_k \neq 0$ for some $k$ and so
\[m \sum_{k = 1}^n |\alpha_k|^2 > 0 \qquad \Longrightarrow \qquad m \sum_{k = 1}^n |\alpha_k|^2 \xrightarrow{m \rightarrow +\infty} +\infty.\]
As a consequence, we can use condition \eqref{eq: condition} at most only for a finite number of integers $m$, and what we conclude is that the corresponding pluricanonical bundles $\canbund_X^{\tensor m}$ have no pseudoholomorphic sections. By Corollary \ref{cor: propagation of solutions} the only information we can get is that for higher tensor powers of $\canbund_X$ (the one corresponding to multiples of the aforementioned values of $m$) the only pseudoholomprhic section we can detect is the trivial one, and this actually gives us no information about Kodaira dimension. 

\section{Examples}

Let $G$ be a connected real Lie group of real dimension $n$, not necessarily compact. Fix a global coframe $\set{e^1, \ldots, e^n}$ on $T^* G$, which is a parallelization and consider the structure equations with respect to it:
\begin{equation}\label{eq: real structure}
d e^i = \sum_{1 \leq j < k \leq n} \mu^i_{jk} e^j \wedge e^k, \qquad i = 1, \ldots, n.
\end{equation}
Let now $M = G \times G$ and denote $\pi_r: M \longrightarrow G$ the projection to the $r$-th factor, for $r = 1, 2$. Then $M$ is a $2n$-dimensional compact manifold, which is parallelized by the following coframe for $T^* M$:
\[v^i = \pi_1^* e^i, \qquad w^i = \pi_2^* e^i, \qquad i = 1, \ldots, n.\]

We define on $M$ the almost complex structure $J$ by requiring that the bundle of $(1, 0)$-forms is generated by
\[\varphi^i = v^i + \ii w^i.\]
Observe that with this definition the set of dual vector fields $\set{v_1, w_1, \ldots, v_n, w_n}$ is a parallelization of $M$ with respect to which the almost complex structure $J$ is in standard form, hence we are in the framework introduced in Section \ref{parallelizable}.

The following lemma describes $d\varphi^i$: it is a straightforward computation, so we omit its proof.

\begin{lemma}
We have that
\begin{equation}
\begin{array}{rl}
\del \varphi^i = & \frac{1}{4}\pa{1 - \ii} \sum_{1 \leq j < k \leq n} \mu^i_{jk} \varphi^j \wedge \varphi^k;\\
\delbar \varphi^i = & \frac{1}{4} \pa{1 + \ii} \sum_{1 \leq j < k \leq n} \mu^i_{jk} \pa{\varphi^j \wedge \bar{\varphi}^k - \varphi^k \wedge \bar{\varphi}^j};\\
\mubar \varphi^i = & \frac{1}{4}\pa{1 - \ii} \sum_{1 \leq j < k \leq n} \mu^i_{jk} \bar{\varphi}^j \wedge \bar{\varphi}^k.
\end{array}
\end{equation}
\end{lemma}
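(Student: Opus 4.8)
The plan is to reduce everything to the structure equations \eqref{eq: real structure} on $G$ together with a change of coframe from the real pair $\set{v^i, w^i}$ to the complex pair $\set{\varphi^i, \bar{\varphi}^i}$. First I would exploit that $d$ commutes with pullback: since $v^i = \pi_1^* e^i$ and $w^i = \pi_2^* e^i$, the structure equations \eqref{eq: real structure} give immediately
\[dv^i = \sum_{1 \leq j < k \leq n} \mu^i_{jk}\, v^j \wedge v^k, \qquad dw^i = \sum_{1 \leq j < k \leq n} \mu^i_{jk}\, w^j \wedge w^k,\]
with the \emph{same} structure constants $\mu^i_{jk}$, and with no mixed $v \wedge w$ terms appearing because the two factors are pulled back along different projections. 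Hence $d\varphi^i = dv^i + \ii\, dw^i = \sum_{j<k}\mu^i_{jk}\pa{v^j \wedge v^k + \ii\, w^j \wedge w^k}$, and the entire problem reduces to rewriting the bracket in the complex coframe.

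Next I would invert the defining relations, writing $v^i = \tfrac{1}{2}\pa{\varphi^i + \bar{\varphi}^i}$ and $w^i = \tfrac{\ii}{2}\pa{\bar{\varphi}^i - \varphi^i}$, substitute into $v^j \wedge v^k + \ii\, w^j \wedge w^k$, and expand. Grouping the four resulting wedge products $\varphi^j \wedge \varphi^k$, $\varphi^j \wedge \bar{\varphi}^k$, $\bar{\varphi}^j \wedge \varphi^k$, $\bar{\varphi}^j \wedge \bar{\varphi}^k$ by their coefficients, the pure $(2,0)$ and $(0,2)$ pieces both come out with factor $\tfrac{1}{4}\pa{1 - \ii}$, while the two $(1,1)$ terms combine with factor $\tfrac{1}{4}\pa{1 + \ii}$; using $\bar{\varphi}^j \wedge \varphi^k = -\varphi^k \wedge \bar{\varphi}^j$ turns the mixed part into $\varphi^j \wedge \bar{\varphi}^k - \varphi^k \wedge \bar{\varphi}^j$, which is exactly the antisymmetric combination in the statement.

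Finally I would read off the three operators by bidegree. Since $\varphi^i$ has bidegree $(1,0)$, in the decomposition $d = \mu + \del + \delbar + \mubar$ the component $\mu\varphi^i$ would have bidegree $(3,-1)$ and so vanishes automatically, whereas the $(2,0)$, $(1,1)$, and $(0,2)$ components of $d\varphi^i$ are by definition $\del\varphi^i$, $\delbar\varphi^i$, and $\mubar\varphi^i$; these therefore equal the three displayed expressions. I do not expect any genuine obstacle here, as the computation is entirely mechanical; the only point demanding care is the bookkeeping of the factors of $\ii$ and the sign in the antisymmetrization of the $(1,1)$ part, which is precisely why the authors relegate the verification to a single remark.
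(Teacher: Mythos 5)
Your proposal is correct and is precisely the ``straightforward computation'' the paper alludes to when it omits the proof: pull back the structure equations along the two projections to get $dv^i$ and $dw^i$ with no mixed terms, invert to $v^i = \tfrac{1}{2}(\varphi^i + \bar{\varphi}^i)$, $w^i = \tfrac{\ii}{2}(\bar{\varphi}^i - \varphi^i)$, expand, and sort by bidegree, with $\mu\varphi^i$ vanishing for bidegree reasons. Your coefficient bookkeeping checks out: the $(2,0)$ and $(0,2)$ parts carry $\tfrac{1}{4}(1-\ii)$ and the antisymmetrized $(1,1)$ part carries $\tfrac{1}{4}(1+\ii)$, exactly as in the statement.
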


\begin{cor}
The almost complex structure $J$ is integrable if and only if $de^i = 0$ for all $i = 1, \ldots, n$.
\end{cor}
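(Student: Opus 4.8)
The plan is to invoke the standard characterization of integrability in terms of the bidegree components of $d$: an almost complex structure $J$ is integrable precisely when the exterior derivative of a $(1,0)$-form has vanishing $(0,2)$-component, that is, when $\mubar = 0$. Since $\mubar$ is an antiderivation of bidegree $(-1, +2)$, it sends any $(0,1)$-form into $\Lambda^{-1, 3} = 0$, so it vanishes identically on the whole exterior algebra as soon as it vanishes on the generating $(1,0)$-forms. Hence I would reduce the claim to the pointwise condition $\mubar \varphi^i = 0$ for every $i = 1, \ldots, n$.

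First I would feed in the formula for $\mubar \varphi^i$ provided by the previous lemma, namely
\[
\mubar \varphi^i = \frac{1}{4}\pa{1 - \ii} \sum_{1 \le j < k \le n} \mu^i_{jk}\, \bar{\varphi}^j \wedge \bar{\varphi}^k.
\]
The scalar $1 - \ii$ is nonzero, and the forms $\bar{\varphi}^j \wedge \bar{\varphi}^k$ with $j < k$ are linearly independent, being part of a basis of $\Lambda^{0,2}$. Therefore $\mubar \varphi^i = 0$ if and only if $\mu^i_{jk} = 0$ for all $j < k$, and letting $i$ range this says that all the structure constants $\mu^i_{jk}$ vanish.

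It then remains to translate this back through the real structure equations \eqref{eq: real structure}. Since the $2$-forms $e^j \wedge e^k$ with $j < k$ are again linearly independent, the identity $de^i = \sum_{j<k} \mu^i_{jk}\, e^j \wedge e^k$ shows that $de^i = 0$ holds exactly when $\mu^i_{jk} = 0$ for all $j < k$. Chaining the two equivalences gives that $\mubar$ vanishes on every $\varphi^i$ if and only if $de^i = 0$ for every $i$, which is the assertion of the corollary.

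The only genuinely substantive point is the justification of the integrability criterion $\mubar = 0$; everything else is linear algebra on the coframe. I would settle this by recalling that, for $(1,0)$-forms $\varphi^i$ dual to the $(1,0)$ vector fields, one has $\mubar \varphi^i(\bar{\cX}_j, \bar{\cX}_k) = -\varphi^i([\bar{\cX}_j, \bar{\cX}_k])$, so the vanishing of $\mubar$ on the $\varphi^i$ is exactly the involutivity of the $(0,1)$-distribution, i.e.\ integrability by Newlander--Nirenberg. I do not expect any real obstacle here, since the analytically nontrivial computation of $\mubar \varphi^i$ has already been carried out in the preceding lemma.
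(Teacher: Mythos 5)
Your proof is correct and follows essentially the same route as the paper's: integrability is equivalent to $\mubar \equiv 0$, which by the formula of the preceding lemma is equivalent to the vanishing of all the structure constants $\mu^i_{jk}$, hence to $de^i = 0$ for all $i$. You merely make explicit the justifications (reduction of $\mubar \equiv 0$ to its vanishing on the $(1,0)$-coframe, linear independence of the $\bar{\varphi}^j \wedge \bar{\varphi}^k$, and the Newlander--Nirenberg criterion) that the paper leaves implicit.
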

\begin{proof}
In fact, $J$ is integrable if and only if $\mubar \equiv 0$, which is equivalent to $\mu^i_{jk} = 0$ for every $i$, $j$ and $k$. In turn this is equivalent to $de^i = 0$ for all $i = 1, \ldots, n$.
\end{proof}

\begin{cor}
We have that
\[\lambda^i_{jk} = \left\{ \begin{array}{ll}
\frac{1}{4} \pa{1 + \ii} \mu^i_{jk} & \text{if } j < k,\\
0 & \text{if } j = k,\\
-\frac{1}{4} \pa{1 + \ii} \mu^i_{kj} & \text{if } j > k.
\end{array}\right.\]
\end{cor}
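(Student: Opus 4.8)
The plan is to extract the coefficients $\lambda^i_{jk}$ directly from the second formula of the preceding Lemma, which already expresses $\delbar\varphi^i$ in the basis of $(1,0)$-forms wedged with $(0,1)$-forms. Recall that the $\lambda^i_{jk}$ are \emph{defined} by \eqref{eq: def lambda}, namely
\[
\delbar\varphi^i = \sum_{j,k=1}^n \lambda^i_{jk}\,\varphi^j\wedge\bar\varphi^k,
\]
so the task is purely one of reindexing: I must rewrite the antisymmetrized sum $\sum_{1\le j<k\le n}\mu^i_{jk}(\varphi^j\wedge\bar\varphi^k - \varphi^k\wedge\bar\varphi^j)$ so that it runs over \emph{all} ordered pairs $(j,k)$ and then read off the coefficient of each $\varphi^j\wedge\bar\varphi^k$.

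First I would split the antisymmetrized expression into its two halves. The term $\varphi^j\wedge\bar\varphi^k$ with $j<k$ contributes $\tfrac14(1+\ii)\mu^i_{jk}$ to the coefficient $\lambda^i_{jk}$, which matches the first case. For the second half, the term $-\varphi^k\wedge\bar\varphi^j$ with $j<k$ is, after renaming the summation indices, exactly a term of the form $\varphi^a\wedge\bar\varphi^b$ with $a>b$ (set $a=k$, $b=j$), carrying coefficient $-\tfrac14(1+\ii)\mu^i_{jk}=-\tfrac14(1+\ii)\mu^i_{ab}$ where now $a>b$ means the relevant $\mu$ is $\mu^i_{ba}$; relabelling back to the statement's indices $(j,k)$ with $j>k$ gives $-\tfrac14(1+\ii)\mu^i_{kj}$, which is the third case. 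Finally, since the original sum only involves strictly ordered pairs $j<k$, no diagonal term $\varphi^j\wedge\bar\varphi^j$ ever appears, so $\lambda^i_{jj}=0$, giving the middle case.

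The only genuine subtlety, and the step where one must be careful rather than merely mechanical, is the bookkeeping of signs and the convention implicit in \eqref{eq: def lambda}: that equation sums over \emph{all} pairs $(j,k)$ without any ordering restriction, whereas the Lemma's formula is written over $j<k$ only. One must therefore verify that there is no double-counting and that the coefficient of a fixed monomial $\varphi^j\wedge\bar\varphi^k$ receives a contribution from exactly one term of the antisymmetrized sum (the $j<k$ piece for the upper-triangular case, and the $-\varphi^k\wedge\bar\varphi^j$ piece for the lower-triangular case). Since $\varphi^j\wedge\bar\varphi^k$ for distinct $j,k$ are linearly independent among the $(1,1)$-forms, matching coefficients is unambiguous once the sum is correctly expanded.

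I expect no real obstacle here: the result is a direct corollary of the Lemma, and the proof is a one-line index manipulation. The main thing to double-check is the sign in the lower-triangular case and the precise index appearing in $\mu^i_{kj}$ versus $\mu^i_{jk}$, which follows from the antisymmetry convention $\mu^i_{jk}$ being defined only for $j<k$ in \eqref{eq: real structure}.
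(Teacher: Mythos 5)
Your proposal is correct and is exactly the argument the paper intends: the corollary is stated without proof precisely because it amounts to expanding the Lemma's antisymmetrized sum over $j<k$ into a sum over all ordered pairs and matching coefficients against the defining equation \eqref{eq: def lambda}, using the linear independence of the forms $\varphi^j\wedge\bar\varphi^k$. Your bookkeeping of the sign and the index swap $\mu^i_{kj}$ in the lower-triangular case, as well as the vanishing of the diagonal terms, is accurate.
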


Using \eqref{eq: alpha} it is now possible to compute that
\begin{equation}
\begin{array}{rl}
\alpha_k = & \sum_{i = i}^n \lambda^i_{ik} =\\
= & \sum_{1 \leq i < k \leq n} \frac{1}{4} \pa{1 + \ii} \mu^i_{ik} + \sum_{1 \leq k < i \leq n} \pa{-\frac{1}{4}} \pa{1 + \ii} \mu^i_{ki} =\\
= & \frac{1}{4} \pa{1 + \ii} \pa{\sum_{1 \leq i < k \leq n} \mu^i_{ik} - \sum_{1 \leq k < i \leq n} \mu^i_{ki}}.
\end{array}
\end{equation}
As a consequence we have the following proposition.

\begin{prop}\label{prop: trivial canonical}
With the same notations used in this section, we have that
\begin{equation}\label{eq: condition trivial canonical}
\delbar \pa{\varphi^1 \wedge \ldots \wedge \varphi^n} = 0 \Longleftrightarrow \sum_{1 \leq i < k \leq n} \mu^i_{ik} = \sum_{1 \leq k < i \leq n} \mu^i_{ki} \text{ for all } k = 1, \ldots, n.
\end{equation}
\end{prop}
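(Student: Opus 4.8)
The plan is to reduce the vanishing of $\delbar \psi$ to the vanishing of each coefficient $\alpha_k$, and then to read the condition off directly from the explicit expression for $\alpha_k$ obtained just above the statement.

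First I would recall from the earlier lemma that $\delbar \psi = \alpha \wedge \psi$, with $\psi = \varphi^1 \wedge \ldots \wedge \varphi^n$ and $\alpha = -\sum_{k = 1}^n \alpha_k \bar{\varphi}^k$ as in \eqref{eq: alpha}. The key point is that $\psi$ is a \emph{nowhere-vanishing} $(n, 0)$-form, a consequence of the parallelizability of $M$ and of $J$ being in standard form. Hence
\[\alpha \wedge \psi = -\sum_{k = 1}^n \alpha_k \, \bar{\varphi}^k \wedge \psi,\]
and the forms $\bar{\varphi}^k \wedge \psi$, being (up to sign) the distinct standard basis elements $\varphi^1 \wedge \ldots \wedge \varphi^n \wedge \bar{\varphi}^k$ of $\bigwedge^{n, 1} X$, are pointwise linearly independent. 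Therefore $\alpha \wedge \psi = 0$ forces $\alpha_k = 0$ for every $k$, and conversely; that is, $\delbar \psi = 0$ if and only if $\alpha_k = 0$ for all $k = 1, \ldots, n$.

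It then remains to invoke the explicit computation preceding the statement, namely
\[\alpha_k = \frac{1}{4}\pa{1 + \ii}\pa{\sum_{1 \leq i < k \leq n} \mu^i_{ik} - \sum_{1 \leq k < i \leq n} \mu^i_{ki}}.\]
Since the scalar factor $\frac{1}{4}\pa{1 + \ii}$ is nonzero, the condition $\alpha_k = 0$ is equivalent to $\sum_{1 \leq i < k \leq n} \mu^i_{ik} = \sum_{1 \leq k < i \leq n} \mu^i_{ki}$. Combining this with the previous step, $\delbar \psi = 0$ holds precisely when this equality holds for every $k$, which is exactly the claimed characterization.

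I do not expect any serious obstacle, since the proposition is an immediate corollary of the explicit formula for $\alpha_k$ already established. The only step warranting a moment's care is the passage from $\alpha \wedge \psi = 0$ to $\alpha = 0$: this uses the injectivity of $\wedge\, \psi \colon \bigwedge^{0, 1} X \longrightarrow \bigwedge^{n, 1} X$, which holds precisely because $\psi$ is nowhere vanishing. Without this property one could only conclude a weaker pointwise proportionality, so it is worth flagging explicitly.
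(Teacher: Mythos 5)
Your proof is correct and follows exactly the route the paper intends: the paper states the proposition as an immediate consequence of the displayed formula $\alpha_k = \frac{1}{4}(1+\ii)\bigl(\sum_{1 \leq i < k \leq n} \mu^i_{ik} - \sum_{1 \leq k < i \leq n} \mu^i_{ki}\bigr)$ together with $\delbar\psi = \alpha \wedge \psi$, leaving the proof implicit. Your only addition is to spell out why $\alpha \wedge \psi = 0$ forces each $\alpha_k = 0$ (pointwise linear independence of the $\bar{\varphi}^k \wedge \psi$, coming from the global coframe), which is precisely the step the paper takes for granted.
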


Let $\gothg$ be the Lie algebra of $G$, we can then see the global frame $\set{e_1, \ldots, e_n}$ dual to $\set{e^1, \ldots, e^n}$ as a basis for $\gothg$. The adjoint representation is
\[\begin{array}{rccc}
\operatorname{ad}: & \gothg & \longrightarrow & \End(\gothg)\\
 & X & \longmapsto &  [X, \blank].
\end{array}\]

\begin{prop}\label{prop: unimod cond}
With the notations introduced in this section, we have that $\operatorname{tr}\pa{\operatorname{ad}(X)} = 0$ for every $X \in \gothg$ if and only if
\[\sum_{1 \leq i < k} \mu^i_{ik} = \sum_{k < i \leq n} \mu^i_{ki} \qquad \text{for all } k = 1, \ldots, n.\]
\end{prop}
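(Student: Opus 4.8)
The plan is to compute the trace of $\operatorname{ad}(X)$ directly in the chosen basis $\set{e_1, \ldots, e_n}$ of $\gothg$ and to match the resulting expression against the structure constants $\mu^i_{jk}$ defined by the structure equations \eqref{eq: real structure}. First I would translate the structure equations for the coframe into bracket relations for the dual frame. Since $de^i = \sum_{1 \leq j < k \leq n} \mu^i_{jk}\, e^j \wedge e^k$, the standard formula $de^i(e_j, e_k) = -e^i([e_j, e_k])$ gives, for $j < k$, the bracket relation $[e_j, e_k] = -\sum_{i=1}^n \mu^i_{jk}\, e_i$ (up to a global sign depending on the convention for $d$ on $1$-forms, which I would fix once at the start and carry through consistently).

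Next I would compute the matrix of $\operatorname{ad}(e_k) = [e_k, \blank]$ in this basis and read off its diagonal entries. The $(i, i)$ diagonal entry of $\operatorname{ad}(e_k)$ is the coefficient of $e_i$ in $[e_k, e_i]$. Splitting into the cases $i < k$ and $i > k$ (the case $i = k$ contributes nothing, since $[e_k, e_k] = 0$), and using the bracket relations above together with antisymmetry of the bracket, the coefficient of $e_i$ in $[e_k, e_i]$ is governed by $\mu^i_{ki}$ when $k < i$ and by $\mu^i_{ik}$ (with an opposite sign coming from swapping the arguments) when $i < k$. Summing over $i$ then yields
\[\operatorname{tr}\pa{\operatorname{ad}(e_k)} = \sum_{1 \leq i < k} \mu^i_{ik} - \sum_{k < i \leq n} \mu^i_{ki}\]
up to the overall sign fixed earlier. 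Since the $e_k$ form a basis and $\operatorname{tr} \circ \operatorname{ad}$ is linear in $X$, the condition $\operatorname{tr}\pa{\operatorname{ad}(X)} = 0$ for all $X \in \gothg$ is equivalent to its vanishing on each $e_k$, which is precisely the stated equality $\sum_{1 \leq i < k} \mu^i_{ik} = \sum_{k < i \leq n} \mu^i_{ki}$ for all $k$.

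The computation is routine bookkeeping, so I do not expect a genuine obstacle; the one place demanding care is \emph{sign and index bookkeeping}. The structure constants $\mu^i_{jk}$ are only defined for $j < k$, so every bracket $[e_k, e_i]$ must be rewritten to put its arguments in increasing order, and each such reordering introduces a sign from the antisymmetry of the Lie bracket. Getting these signs to line up so that the two index ranges ($i < k$ and $i > k$) enter the trace with the correct relative sign is the delicate step, and it is exactly what makes the final equality match the one appearing in Proposition \ref{prop: trivial canonical} and hence establishes the link with condition \eqref{eq: condition trivial canonical}.
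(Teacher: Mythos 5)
Your proposal is correct and follows essentially the same route as the paper's proof: both translate the structure equations into the bracket relations $[e_j, e_k] = -\sum_{i} \mu^i_{jk}\, e_i$ (for $j < k$), read off the diagonal entries of the matrix of $\operatorname{ad}(e_k)$ in the basis $\set{e_1, \ldots, e_n}$ to get $\operatorname{tr}\pa{\operatorname{ad}(e_k)} = \sum_{1 \leq i < k} \mu^i_{ik} - \sum_{k < i \leq n} \mu^i_{ki}$, and then use linearity of $\operatorname{tr} \circ \operatorname{ad}$ to reduce the condition for all $X \in \gothg$ to the basis vectors. Your remark that the overall sign convention for $d$ is immaterial is also sound, since the stated condition is exactly the vanishing of the difference of the two sums.
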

\begin{proof}
We work explicitly with the basis $\set{e_1, \ldots, e_n}$ for $\gothg$. Since taking the trace is a linear operation we have that $\operatorname{tr}\pa{\operatorname{ad}(X)} = 0$ for every $X \in \gothg$ if and only if $\operatorname{tr}\pa{\operatorname{ad}(e_i)} = 0$ for every $i = 1, \ldots, n$. Fix then $i \in \set{1, \ldots, n}$, and observe that
\[\operatorname{ad}(e_i)(e_j) = [e_i, e_j] = \left\{ \begin{array}{ll}
\sum_{k = 1}^n (-\mu^k_{ij}) e_k & \text{if } i < j,\\
0 & \text{if } i = j,\\
\sum_{k = 1}^n (\mu^k_{ji}) e_k & \text{if } i > j.
\end{array} \right.\]
This means that the matrix representing $\operatorname{ad}(e_i)$ in the given basis is
\[M_i = \pa{m^i_{jk}} \qquad \text{with} \qquad m^i_{jk} = \left\{ \begin{array}{ll}
-\mu^k_{ij} & \text{if } i < j,\\
0 & \text{if } i = j,\\
\mu^k_{ji} & \text{if } i > j.
\end{array} \right.\]
As a consequence
\[\operatorname{tr}\pa{\operatorname{ad}(e_i)} = \sum_{j = 1}^n m^i_{jj} = \sum_{1 \leq j < i} \mu^j_{ji} - \sum_{i< j \leq n} \mu^j_{ij}\]
which readily implies the proposition.
\end{proof}

Proposition \ref{prop: unimod cond} readily implies the following result.

\begin{thm}\label{thm: unimodular has kod dim 0}
Let $G$ be a connected real Lie group, and let $X = (G \times G, J)$ where $J$ is the left invariant almost complex structure defined in this section. Then $G$ is unimodular if and only if $\delbar\pa{\varphi^1 \wedge \ldots \wedge \varphi^n} = 0$. In particular, in this case the canonical bundle of $X$ is pseudoholomorphically trivial and $\kod(X) = 0$.
\end{thm}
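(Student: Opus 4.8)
The plan is to prove the stated equivalence by concatenating the two propositions immediately preceding it, and then to read off the ``in particular'' clause from the machinery developed in Section \ref{parallelizable}. The one external ingredient I would invoke is the classical characterization of unimodularity: a connected real Lie group $G$ is unimodular if and only if $\operatorname{tr}\pa{\operatorname{ad}(X)} = 0$ for every $X \in \gothg$. This is standard, following from the identity $\det \operatorname{Ad}(\exp X) = \exp\pa{\operatorname{tr}\pa{\operatorname{ad}(X)}}$, so that the modular character of $G$ is trivial exactly when all these traces vanish (using connectedness to reduce to the image of the exponential map).

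With that fact in hand, the equivalence follows by transitivity. Proposition \ref{prop: unimod cond} rewrites the condition $\operatorname{tr}\pa{\operatorname{ad}(X)} = 0$ for all $X$ as the combinatorial identity
\[\sum_{1 \leq i < k} \mu^i_{ik} = \sum_{k < i \leq n} \mu^i_{ki} \qquad \text{for all } k = 1, \ldots, n,\]
while Proposition \ref{prop: trivial canonical} rewrites this \emph{same} identity as $\delbar\pa{\varphi^1 \wedge \ldots \wedge \varphi^n} = 0$. Chaining the two equivalences gives precisely $G$ unimodular $\iff$ $\delbar \psi = 0$, where $\psi = \varphi^1 \wedge \ldots \wedge \varphi^n$, which is the first assertion of the theorem.

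For the remaining clause, I would note that $\psi$ is a nowhere-vanishing smooth section of the canonical bundle $\cK_X$, so the vanishing $\delbar \psi = 0$ says exactly that $\psi$ is pseudoholomorphic; this trivializes the canonical bundle pseudoholomorphically, $\omega_X \simeq \cO_X$, as observed in the remark following Proposition \ref{prop: kod = 0}. Invoking that same proposition then yields $\kod(X) = 0$. I do not expect any genuine obstacle here, since all of the analytic and cohomological content has already been absorbed into the earlier results; the only point demanding care is the invocation of the classical unimodularity criterion, which is what makes Proposition \ref{prop: unimod cond} applicable in the first place.
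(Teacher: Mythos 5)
Your proposal is correct and follows exactly the route the paper intends: the paper states that Proposition \ref{prop: unimod cond} ``readily implies'' the theorem, the implicit argument being precisely your chain (classical criterion: $G$ connected unimodular $\iff$ $\operatorname{tr}(\operatorname{ad}(X)) = 0$ for all $X$, then Proposition \ref{prop: unimod cond}, then Proposition \ref{prop: trivial canonical}, then Proposition \ref{prop: kod = 0} for the Kodaira dimension claim). Your explicit justification of the unimodularity criterion via $\det \operatorname{Ad}(\exp X) = \exp(\operatorname{tr}(\operatorname{ad}(X)))$ and connectedness is a detail the paper leaves tacit, but it is exactly the right ingredient.
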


\begin{rem}
Since on a Lie algebra we have that for $1 \leq j < k \leq n$
\[[e_j, e_k] = \sum_{i = 1}^n e^i\pa{[e_j, e_k]} e_i = \sum_{i = 1}^n -de^i(e_j, e_k) e_i = -\sum_{i = 1}^n \mu^i_{jk} e_i\]
we can read the coefficients $\mu^i_{jk}$ directly from the expression of $[e_j, e_k]$.
\end{rem}

By \cite[Lemma 6.2]{Milnor}, any connected Lie group which admits a compact quotient (hence, in particular, any compact Lie group) is unimodular. We now present some examples constructed from compact Lie groups, as well as an example from a non-unimodular Lie group. In the exposition we will write the equations for the Lie bracket on a basis for $\gothg$ and show directly that the condition of Proposition \ref{prop: trivial canonical} is fulfilled, without making use of Theorem \ref{thm: unimodular has kod dim 0} (this is possible since our examples have small dimension).

\begin{exam}[$G = \SU(2)$]\label{exam: spheres}
In this example, let $G = \SU(2)$. Its Lie algebra is
\[\mathfrak{su}(2) = \left\langle \underbrace{\left( \begin{array}{cc}
\ii & 0\\
0 & -\ii
\end{array} \right)}_{e_1}, \underbrace{\left( \begin{array}{cc}
0 & 1\\
-1 & 0
\end{array} \right)}_{e_2}, \underbrace{\left( \begin{array}{cc}
0 & \ii\\
\ii & 0
\end{array} \right)}_{e_3} \right\rangle,\]
with structure equations
\[[e_1, e_2] = 2e_3, \qquad [e_1, e_3] = -2e_2, \qquad [e_2, e_3] = 2e_1.\]
It is then easy to see that the only non vanishing coefficients $\mu^i_{jk}$ are
\[\mu^1_{23} = -2, \qquad \mu^2_{13} = 2, \qquad \mu^3_{12} = -2.\]
Hence the condition expressed in \eqref{eq: condition trivial canonical} is matched and so $\kod(X) = 0$.
\end{exam}

\begin{rem}
Recall that $\SU(2) \simeq S^3$, so there is another well known complex structure defined on $\SU(2) \times \SU(2)$ introduced by Calabi and Eckmann in \cite{Calabi-Eckmann}. Calabi--Eckmann manifolds are known to be complex manifolds of Kodaira dimension $-\infty$. Our example is interesting since it shows that it is possible to find non-integrable almost complex structures on a manifold whose Kodaira dimension is strictly greater than other integrable complex structures: this seems to be against the intuition that integrable almost complex structures have more chances to have sections than non-integrable ones.
\end{rem}

\begin{exam}[$G = \SO(4)$]
In this example, let $G = \SO(4)$. Its Lie algebra is
\[\mathfrak{so}(4) = \left\langle e_1, e_2, e_3, e_4, e_5, e_6 \right\rangle\]
with the following structure equations
\[\begin{array}{lllll}
[e_1, e_2] = -e_4, & [e_1, e_5] = e_3, & [e_2, e_4] = -e_1, & [e_3, e_4] = 0, & [e_4, e_5] = -e_6,\\
{[}e_1, e_3] = -e_5, & [e_1, e_6] = 0, & [e_2, e_5] = 0, & [e_3, e_5] = -e_1, & [e_4, e_6] = e_5,\\
{[}e_1, e_4] = e_2, & [e_2, e_3] = -e_6, & [e_2, e_6] = e_3, & [e_3, e_6] = -e_2, & [e_5, e_6] = -e_4.
\end{array}\]
Hence the condition expressed in \eqref{eq: condition trivial canonical} is matched and so $\kod(X) = 0$.
\end{exam}

\begin{exam}[$G = \Sp(2)$]
In this example, let $G = \Sp(2)$ be the compact symplectic group. Its Lie algebra is
\[\mathfrak{sp}(2) = \left\langle e_1, e_2, e_3, e_4, e_5, e_6, e_7, e_8, e_9, e_{10} \right\rangle,\]
where with respect to the standard quaternionic units $i$, $j$ and $k$
\[\begin{array}{llll}
e_1 = \left( \begin{array}{cc}
i & 0\\
0 & 0
\end{array} \right), & e_2 = \left( \begin{array}{cc}
j & 0\\
0 & 0
\end{array} \right), & e_3 = \left( \begin{array}{cc}
k & 0\\
0 & 0
\end{array} \right), & e_4 = \left( \begin{array}{cc}
0 & 1\\
-1 & 0
\end{array} \right),\\
e_5 = \left( \begin{array}{cc}
0 & i\\
i & 0
\end{array} \right), & e_6 = \left( \begin{array}{cc}
0 & j\\
j & 0
\end{array} \right), & e_7 = \left( \begin{array}{cc}
0 & k\\
k & 0
\end{array} \right), & e_8 = \left( \begin{array}{cc}
0 & 0\\
0 & i
\end{array} \right),\\
e_9 = \left( \begin{array}{cc}
0 & 0\\
0 & j
\end{array} \right), &  & e_{10} = \left( \begin{array}{cc}
0 & 0\\
0 & k
\end{array} \right). & 
\end{array}\]
An explicit computation of the commutators, hence of the coefficients $\mu^i_{jk}$ shows that the condition expressed in \eqref{eq: condition trivial canonical} is matched and so $\kod(X) = 0$.
\end{exam}

\begin{exam}[$G = \SU(3)$]
In this example, let $G = \SU(3)$. Its Lie algebra is
\[\mathfrak{su}(3) = \left\langle e_1, e_2, e_3, e_4, e_5, e_6, e_7, e_8 \right\rangle,\]
with
\[\begin{array}{ll}
e_1 = \left( \begin{array}{ccc}
\ii & 0 & 0\\
0 & 0 & 0\\
0 & 0 & -\ii
\end{array} \right), & e_2 = \left( \begin{array}{ccc}
0 & 0 & 0\\
0 & \ii & 0\\
0 & 0 & -\ii
\end{array} \right),\\
e_3 = \left( \begin{array}{ccc}
0 & 1 & 0\\
-1 & 0 & 0\\
0 & 0 & 0
\end{array} \right), & e_4 = \left( \begin{array}{ccc}
0 & \ii & 0\\
\ii & 0 & 0\\
0 & 0 & 0
\end{array} \right),\\
e_5 = \left( \begin{array}{ccc}
0 & 0 & 1\\
0 & 0 & 0\\
-1 & 0 & 0
\end{array} \right), & e_6 = \left( \begin{array}{ccc}
0 & 0 & \ii\\
0 & 0 & 0\\
\ii & 0 & 0
\end{array} \right),\\
e_7 = \left( \begin{array}{ccc}
0 & 0 & 0\\
0 & 0 & 1\\
0 & -1 & 0
\end{array} \right), & e_8 = \left( \begin{array}{ccc}
0 & 0 & 0\\
0 & 0 & \ii\\
0 & \ii & 0
\end{array} \right).
\end{array}\]
An explicit computation of the commutators, hence of the coefficients $\mu^i_{jk}$ shows that the condition expressed in \eqref{eq: condition trivial canonical} is matched and so $\kod(X) = 0$.
\end{exam}

To conclude, we present an explicit example where the group $G$ is not unimodular.

\begin{exam}
Let $G = \IR^4$ with the following group law:
\[(a_1, a_2, a_3, a_4) \star (x_1, x_2, x_3, x_4) = \pa{x_1 + a_1, e^{-a_1}x_2 + a_2, e^{-a_1}x_3 + a_3, x_4 + a_4}.\]
Then
\[E^1 = dx_1, \qquad E^2 = e^{x_1} dx_2, \qquad E^3 = e^{x_1} dx_3, \qquad E^4 = dx_4\]
defines a coframe of left invariant $1$-forms on $G$, with structure equations
\[dE^1 = 0, \qquad dE^2 = E^1 \wedge E^2, \qquad dE^3 = E^1 \wedge E^3, \qquad dE^4 = 0.\]
Then $G$ is not unimodular since, for example, $\operatorname{tr}\pa{[E_1, \blank]} = -2 \neq 0$. We consider a second copy of $G$ with coordinates $(y_1, y_2, y_3, y_4)$ and we define on $M = G \times G$ the invariant almost complex structure $J$ whose bundle of $(1, 0)$-forms is generated by
\[\varphi^i = v^i + \ii w^i, \qquad \text{where } v^i = \pi_i^* E^i, w^i = \pi_2^* E^i.\]
A direct computation with the structure equations shows that
\[\begin{array}{ll}
\delbar \varphi^1 = 0, & \delbar \varphi^2 = \frac{1}{4} (1 + \ii) \pa{\varphi^1 \wedge \bar{\varphi}^2 - \varphi^2 \wedge \bar{\varphi}^1},\\
\delbar \varphi^4 = 0, & \delbar \varphi^3 = \frac{1}{4} (1 + \ii) \pa{\varphi^1 \wedge \bar{\varphi}^3 - \varphi^3 \wedge \bar{\varphi}^1},
\end{array}\]
and so
\[\delbar\pa{\varphi^1 \wedge \varphi^2 \wedge \varphi^3 \wedge \varphi^4} = \underbrace{\frac{1}{2} (1 + \ii) \bar{\varphi}^1}_{\alpha} \wedge \varphi^1 \wedge \varphi^2 \wedge \varphi^3 \wedge \varphi^4.\]
Let $\cX_1, \cX_2, \cX_3, \cX_4$ be the vector fields dual to $\varphi^1, \varphi^2, \varphi^3, \varphi^4$ respectively. Then a smooth $m$-canonical section $f \cdot \pa{\varphi^1 \wedge \varphi^2 \wedge \varphi^3 \wedge \varphi^4}^{\tensor m}$ is pseudoholomorphic if and only if $f = u + \ii v$ satisfies \eqref{eq: pseudohol cond 3}, which reads as
\begin{equation}\label{eq: system non unimodular}
\left\{ \begin{array}{l}
\frac{\partial u}{\partial x_1} - \frac{\partial v}{\partial y_1} + mu - mv = 0\\
\frac{\partial u}{\partial y_1} + \frac{\partial v}{\partial x_1} + mu + mv = 0\\
e^{-x_1} \frac{\partial u}{\partial x_2} - e^{-y_1} \frac{\partial v}{\partial y_2} = 0\\
e^{-y_1} \frac{\partial u}{\partial y_2} + e^{-x_1} \frac{\partial v}{\partial x_2} = 0\\
e^{-x_1} \frac{\partial u}{\partial x_3} - e^{-y_1} \frac{\partial v}{\partial y_3} = 0\\
e^{-y_1} \frac{\partial u}{\partial y_3} + e^{-x_1} \frac{\partial v}{\partial x_3} = 0\\
\frac{\partial u}{\partial x_4} - \frac{\partial v}{\partial y_4} = 0\\
\frac{\partial u}{\partial y_4} + \frac{\partial v}{\partial x_4} = 0.
\end{array} \right.
\end{equation}
In order to solve this system, we set
\[u = e^{-mx_1 - mx_2} \cdot \hat{u}, \qquad v = e^{-mx_1 - mx_2} \cdot \hat{v}\]
and determine the conditions that $(\hat{u}, \hat{v})$ must satisfy. It turns out that they are (formally) the same as \eqref{eq: system non unimodular}, except the first two which become
\[\frac{\partial \hat{u}}{\partial x_1} - \frac{\partial \hat{v}}{\partial y_1} = 0, \qquad \frac{\partial \hat{u}}{\partial y_1} + \frac{\partial \hat{v}}{\partial x_1} = 0\]
respectively. Then we consider the following change of variables:
\[\begin{array}{rl}
(x_1, y_1, x_2, y_2, x_3, y_3, x_4, y_4) = & g(X_1, Y_1, X_2, Y_2, X_3, Y_3, X_4, Y_4) =\\
= & (X_1, Y_1, e^{-X_1} X_2, e^{-Y_1} Y_2, e^{-X_1} X_3, e^{-Y_1} Y_3, X_4, Y_4)
\end{array}\]
and we set
\[\hat{U} = \hat{u} \circ g, \qquad \hat{V} = \hat{v} \circ g.\]
The equations for $(\hat{U}, \hat{V})$ now are the classical Cauchy--Riemann equations, so the solutions are functions $\hat{F} = \hat{U} + \ii \hat{V}$ which are holomorphic in $Z_1 = X_1 + \ii Y_1$, $Z_2 = X_2 + \ii Y_2$, $Z_3 = X_3 + \ii Y_3$, $Z_4 = X_4 + \ii Y_4$. As a consequence, a smooth $m$-canonical section $f \cdot \pa{\varphi^1 \wedge \varphi^2 \wedge \varphi^3 \wedge \varphi^4}^{\tensor m}$ is pseudoholomorphic if and only if we can write $f$ as
\[e^{-m(x_1 + y_1)} \cdot \hat{F}\pa{x_1 + \ii y_1, e^{x_1}x_2 + \ii e^{y_1}y_2, e^{x_1}x_3 + \ii e^{y_1}y_3, x_4 + \ii y_4},\]
where $\hat{F}: \IC^4 \longrightarrow \IC$ is a holomorphic function.
\end{exam}

\section{Further geometrical properties of almost complex real parallelizable manifolds}

\subsection{Norden structures}
We recall the definition of Norden structure, introduced in \cite{Norden}.
\begin{defin} Let $(M,g)$ be an almost complex manifold with a pseudo-Riemannian metric $g$ such that $J$ is $g-$symmetric, then $(J,g)$ is called \emph {Norden structure} on $M$ and $(M,J,g)$ a \emph {Norden manifold}.
\end{defin} 

In our previous paper \cite {Cattaneo-Nannicini-Tomassini-2} we constructed natural Norden structures for almost complex $4-$dimensional solvmanifolds, analogous construction can be applied to real parallelizable smooth manifolds of even dimension.

Let $M$ be a real parallelizable smooth manifold of real dimension $2n$, $n\geq 2$; let $\{v_1,...,v_n,w_1,...,w_n\}$ be a basis of $\Gamma(M,TM)$ wich is a parallelism of $M$ and let $\{v^1,...,v^n,w^1,...,w^n\}$ be the dual frame.

We define the natural neutral pseudo-Riemannian metric $g$ on $M$ by:
$$g=-v^1\otimes v^1-v^2\otimes v^2-...-v^n\otimes v^n+w^1 \otimes w^1+w^2 \otimes w^2+...+w^n \otimes w^n.$$
Let $J$ be the almost complex structure on $M$ defined in the standard form:
$$Jv_i=w_i, \qquad Jw_i=-v_i$$
for all $i=1,...,n$.

Direct computation gives the following.
\begin{prop} $(J,g)$ is a Norden structure on $M$. Moreover the twin metric $\tilde g$, defined by $\tilde g(X,Y)=g(JX,Y)$ for all $X,Y \in \Gamma(M,TM)$, is given by:
$$\tilde g=v^1\otimes w^1+v^2\otimes w^2+...+v^n\otimes w^n+w^1 \otimes v^1+w^2 \otimes v^2+...+w^n \otimes v^n.$$
\end{prop}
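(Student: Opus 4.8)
The plan is to verify the two assertions by direct evaluation on the parallelism $\set{v_1,\ldots,v_n,w_1,\ldots,w_n}$ and its dual coframe. First I would record the values of $g$ on the frame: by definition $g(v_i,v_j)=-\delta_{ij}$, $g(w_i,w_j)=\delta_{ij}$ and $g(v_i,w_j)=0$. From this it is immediate that $g$ is symmetric and that, in this frame, it is represented by the diagonal matrix $\operatorname{diag}(-1,\ldots,-1,1,\ldots,1)$; this is non-degenerate and has neutral signature $(n,n)$, so $g$ is a pseudo-Riemannian metric.

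Next I would check that $J$ is $g$-symmetric, i.e.\ that $g(JX,Y)=g(X,JY)$ for all $X,Y$; by bilinearity it suffices to test this on the four types of pairs of frame vectors. Using $Jv_i=w_i$ and $Jw_i=-v_i$ together with the values above one finds, for example, $g(Jv_i,w_j)=g(w_i,w_j)=\delta_{ij}$ and $g(v_i,Jw_j)=g(v_i,-v_j)=\delta_{ij}$, and the three remaining cases are analogous. The point worth emphasizing is that the crucial identity is $g(w_i,w_j)=-g(v_i,v_j)$, i.e.\ that the $w$-block and the $v$-block of $g$ carry opposite signs: this is exactly the compatibility that makes $J$ symmetric, and it explains why the neutral signature with the $v^i$ contributing negatively and the $w^i$ positively is the right choice. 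Having verified all four cases, we conclude that $(J,g)$ is a Norden structure.

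Finally, for the twin metric I would simply evaluate $\tilde g(X,Y)=g(JX,Y)$ on the frame. This gives $\tilde g(v_i,v_j)=g(w_i,v_j)=0$ and $\tilde g(w_i,w_j)=g(-v_i,w_j)=0$, whereas $\tilde g(v_i,w_j)=g(w_i,w_j)=\delta_{ij}$ and $\tilde g(w_i,v_j)=g(-v_i,v_j)=\delta_{ij}$. Reading these off in the dual basis yields $\tilde g=\sum_{i=1}^n\pa{v^i\otimes w^i+w^i\otimes v^i}$, the claimed expression (manifestly symmetric, as it must be since $\tilde g(X,Y)=g(JX,Y)=g(X,JY)=\tilde g(Y,X)$ by symmetry of $g$ and $g$-symmetry of $J$).

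There is no real obstacle here: the whole statement reduces to bookkeeping on this $g$-orthogonal frame, the only substantive point being the sign matching noted above. I would only remark that the formula as displayed repeats the summand $v^i\otimes w^i$, whereas the computation produces the symmetric combination $v^i\otimes w^i+w^i\otimes v^i$; the second group of terms should read $w^i\otimes v^i$.
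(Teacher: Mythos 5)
Your proof is correct and takes essentially the same approach as the paper, whose entire proof is the phrase ``Direct computation gives the following'' --- your frame-by-frame verification on the parallelism is precisely that computation. Your closing remark is also right: the displayed formula in the statement contains a typo, and the second group of summands should read $w^i\otimes v^i$, so that $\tilde g=\sum_{i=1}^n\left(v^i\otimes w^i+w^i\otimes v^i\right)$.
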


\subsection{Quasi-statistical structures}
Statistical structures, introduced in \cite{Lauritzen}, are pairs $(g,\nabla)$ of a pseudo-Riemannian metric $g$ and a torsion-free affine connection on a smooth manifold $M$ such that $\nabla g$ is totally symmetric. Statistical manifolds admitting torsion, or quasi-statistical manifolds, were introduced in \cite{K}, we recall here the definition.

\begin{defin} Let $(M,g)$ be a pseudo-Riemannian manifold and let $\nabla$ be an affine connection on $M$ with torsion tensor $T^{\nabla}$.  $(g,\nabla)$ is called a \emph{quasi-statistical structure} on $M$, and $(M,g,\nabla)$ a \emph{quasi-statistical manifold}, or \emph{statistical manifold admitting torsion}, if
$$(\nabla_X g)(Y)-(\nabla _Y g)(X)+g(T^{\nabla} (X,Y))=0$$
for any $X,Y \in \Gamma(M,TM)$.
\end{defin}
A general construction of quasi-statistical structures by means of a pseudo-Riemannian metric, an affine connection and a tensor field on a smooth manifold, is given in \cite{Blaga-Nannicini}. As a particular case of this construction, we have the following.
\begin{prop} (\cite  {Blaga-Nannicini}) Let $(M,J,g)$ be a Norden manifold, let $\nabla^g$ be the Levi-Civita connection of $g$ and let $\eta$ be a non zero $1-$form on $M$. Then $(g, \bar \nabla:=\nabla ^g + J\otimes \eta)$ is a quasi-statistical structure on $M$.
\end{prop}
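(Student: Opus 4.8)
The plan is to verify the quasi-statistical identity by computing its two ingredients—the covariant derivative $\bar\nabla g$ and the torsion $T^{\bar\nabla}$—directly from the definition and then combining them. I read the $(1,2)$-tensor $J \otimes \eta$ as $(J \otimes \eta)(X, Y) = \eta(Y)\, JX$, so that $\bar\nabla_X Y = \nabla^g_X Y + \eta(Y)\, JX$ with $X$ the direction of differentiation and $Y$ the differentiated field. Throughout, the two structural inputs I would exploit are that $\nabla^g$ is metric and torsion-free ($\nabla^g g = 0$, $T^{\nabla^g} = 0$) and that $(J, g)$ is Norden, i.e.\ $J$ is $g$-symmetric: $g(JX, Y) = g(X, JY)$ for all $X, Y \in \Gamma(M, TM)$.

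First I would compute, for all $X, Y, Z \in \Gamma(M, TM)$,
\[(\bar\nabla_X g)(Y, Z) = (\nabla^g_X g)(Y, Z) - \eta(Y)\, g(JX, Z) - \eta(Z)\, g(Y, JX) = -\eta(Y)\, g(JX, Z) - \eta(Z)\, g(JX, Y),\]
where the first term vanishes by metric compatibility and the last equality uses the symmetry of $g$. Next, since $\nabla^g$ is torsion-free, the torsion of $\bar\nabla$ is carried entirely by the difference tensor:
\[T^{\bar\nabla}(X, Y) = \bar\nabla_X Y - \bar\nabla_Y X - [X, Y] = \eta(Y)\, JX - \eta(X)\, JY,\]
so that $g(T^{\bar\nabla}(X, Y), Z) = \eta(Y)\, g(JX, Z) - \eta(X)\, g(JY, Z)$.

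Finally I would substitute these into the defining relation and collect terms in
\[(\bar\nabla_X g)(Y, Z) - (\bar\nabla_Y g)(X, Z) + g\pa{T^{\bar\nabla}(X, Y), Z}.\]
The two terms proportional to $\eta(Y)\, g(JX, Z)$ cancel, and likewise those proportional to $\eta(X)\, g(JY, Z)$; what is left over is precisely $\eta(Z)\pa{g(JY, X) - g(JX, Y)}$. This is the one and only place where the Norden hypothesis is genuinely used: $g$-symmetry of $J$ together with symmetry of $g$ gives $g(JX, Y) = g(X, JY) = g(JY, X)$, so the remainder vanishes identically and the quasi-statistical equation holds. I expect this cancellation to be the crux of the argument, everything else being formal bookkeeping with the definition of $\bar\nabla$.

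To close, I would remark that the identity just established holds for \emph{every} $1$-form $\eta$, including $\eta = 0$; the hypothesis $\eta \neq 0$ plays no role in verifying the quasi-statistical equation itself but serves only to make $T^{\bar\nabla} = \eta(Y)\, JX - \eta(X)\, JY$ nontrivial, so that $(g, \bar\nabla)$ is a genuine statistical structure \emph{admitting} torsion rather than the torsion-free Levi-Civita structure.
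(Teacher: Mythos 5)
Your proof is correct. Note that the paper does not prove this proposition at all --- it is quoted from the reference of Blaga--Nannicini --- so there is no in-paper argument to compare against; your direct verification (compute $\bar\nabla g$ and $T^{\bar\nabla}$, substitute into the defining identity, and watch the surviving term $\eta(Z)\bigl(g(JY,X)-g(JX,Y)\bigr)$ die exactly by the $g$-symmetry of $J$) is the standard and expected one. One point worth emphasizing: your reading of the difference tensor, $(J\otimes\eta)(X,Y)=\eta(Y)\,JX$, is the convention under which the statement is true --- with the opposite contraction $\bar\nabla_XY=\nabla^g_XY+\eta(X)\,JY$ the combination reduces to $\eta(Y)g(JX,Z)-\eta(X)g(JY,Z)$, which does not vanish in general --- so fixing that convention explicitly, as you did at the outset, is not mere bookkeeping but a necessary disambiguation of the statement itself.
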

\begin{cor} For a real parallelizable manifold $M$, of even dimension $2n$, the Norden structure $(J,g)$ defined before and the $1-$forms $\{v^1,...,v^n,w^1,...,w^n\}$, give  $2n$ quasi-statistical structures on $M$, namely: $\{(g, {\bar \nabla}^i:=\nabla ^g + J\otimes v^i)\}_{1\leq i\leq n}$, $\{(g,{\bar \nabla}^{n+i}:=\nabla ^g + J\otimes w^i)\}_{1\leq i\leq n}$.
\end{cor}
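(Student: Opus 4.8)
The plan is to derive this directly from the two preceding propositions, since the corollary is nothing more than a repeated application of the Blaga--Nannicini construction to a distinguished family of $1$-forms. First I would recall that the preceding proposition guarantees that $(J,g)$, with $g$ the natural neutral metric defined above, is a Norden structure, so that $(M,J,g)$ is a genuine Norden manifold and its Levi-Civita connection $\nabla^g$ is well defined. This is the only structural input needed.

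Next I would observe that each of the $2n$ coframe $1$-forms $v^1,\dots,v^n,w^1,\dots,w^n$ is a \emph{nonzero} $1$-form on $M$. Indeed, by construction they constitute the global dual frame of the parallelism $\{v_1,\dots,v_n,w_1,\dots,w_n\}$, so they are nowhere vanishing sections of $T^*M$; in particular none of them is the zero form, which is exactly the hypothesis required to invoke the previous proposition.

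Then I would simply apply the Blaga--Nannicini proposition $2n$ times: taking $\eta=v^i$ for $i=1,\dots,n$ yields the quasi-statistical structures $(g,{\bar\nabla}^i)$ with ${\bar\nabla}^i=\nabla^g+J\otimes v^i$, and taking $\eta=w^i$ for $i=1,\dots,n$ yields $(g,{\bar\nabla}^{n+i})$ with ${\bar\nabla}^{n+i}=\nabla^g+J\otimes w^i$. Each application produces a valid quasi-statistical structure on $M$, so we obtain the asserted $2n$ structures. If one wishes to stress that these are genuinely $2n$ \emph{distinct} structures, I would note that the forms $v^1,\dots,v^n,w^1,\dots,w^n$ are pointwise linearly independent, hence the tensor fields $J\otimes v^i$ and $J\otimes w^i$ are pairwise different and so are the connections ${\bar\nabla}^1,\dots,{\bar\nabla}^{2n}$.

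There is essentially no obstacle here: the content is entirely carried by the preceding proposition, and the corollary only records that the $2n$ coframe $1$-forms constitute admissible choices of $\eta$. The sole point to verify is the nonvanishing of these forms, which is immediate from the fact that they form a coframe dual to the given parallelism.
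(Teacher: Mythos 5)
Your proposal is correct and follows exactly the route the paper intends: the corollary is stated there without proof precisely because it is an immediate application of the Blaga--Nannicini proposition, once one notes (as you do) that $(J,g)$ is a Norden structure and that the coframe forms $v^1,\dots,v^n,w^1,\dots,w^n$ are nowhere vanishing, hence admissible choices of $\eta$. Your additional remark on the pointwise linear independence of the coframe, guaranteeing that the $2n$ connections are pairwise distinct, is a small but worthwhile refinement not made explicit in the paper.
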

We recall  the notion of dual connections which is strictly related to those of statistical and quasi-statistical structures, \cite{Amari}, \cite{Lauritzen}.
\begin{defin} Let $(M,g)$ be a pseudo-Riemannian manifold, two affine connections $\nabla$ and $\nabla^*$ on $M$ are said to be \emph{dual connections} with respect to $g$ if 
$$X(g(Y,Z))=g(\nabla_X Y,Z)+g(Y,\nabla^*_X Z)$$ 
for any $X,Y,Z \in \Gamma(TM)$ and $(g,\nabla,\nabla^*)$ is called a \emph{dualistic structure}.
\end{defin}
Direct computation gives the following.
\begin{lemma} If $\nabla$ is an affine connection on $(M,g)$ then the dual connection $\nabla^*$ of $\nabla$ with respect to $g$ is uniquely defined by:
$${\nabla^*}_X Y=\nabla_X Y+g^{-1}((\nabla_X g)(Y))$$
for any $X,Y\in \Gamma(TM)$. 
\end{lemma}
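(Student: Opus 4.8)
The plan is to derive the stated formula directly from the defining relation of a dual connection and then to invoke the non-degeneracy of $g$ twice: once to extract the formula itself and once to obtain uniqueness. Throughout I work with the defining identity $X(g(Y,Z)) = g(\nabla_X Y, Z) + g(Y, \nabla^*_X Z)$, valid for all $X, Y, Z \in \Gamma(TM)$, and I aim to isolate $\nabla^*_X Z$.

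First I would expand the left-hand side using the definition of the covariant derivative of the metric tensor, namely
\[(\nabla_X g)(Y, Z) = X(g(Y, Z)) - g(\nabla_X Y, Z) - g(Y, \nabla_X Z).\]
Substituting $X(g(Y,Z)) = (\nabla_X g)(Y,Z) + g(\nabla_X Y, Z) + g(Y, \nabla_X Z)$ into the defining relation and cancelling the common term $g(\nabla_X Y, Z)$ from both sides leaves, for all $Y$,
\[g(Y, \nabla^*_X Z) = g(Y, \nabla_X Z) + (\nabla_X g)(Y, Z).\]

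Next I would rewrite the last summand as a pairing against $Y$. Here I would record that $\nabla_X g$ is a \emph{symmetric} $(0,2)$-tensor, being the covariant derivative of the symmetric tensor $g$, so that the $1$-form $(\nabla_X g)(Z) = (\nabla_X g)(Z, \cdot)$ satisfies $(\nabla_X g)(Z)(Y) = (\nabla_X g)(Y, Z)$ with no ambiguity in the slots. Recalling that $g^{-1}$ is the musical isomorphism characterised by $g\big(g^{-1}(\omega), Y\big) = \omega(Y)$ for every $1$-form $\omega$ and every $Y$, this gives $(\nabla_X g)(Y, Z) = g\big(Y, g^{-1}((\nabla_X g)(Z))\big)$. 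The displayed identity then becomes $g(Y, \nabla^*_X Z) = g\big(Y, \nabla_X Z + g^{-1}((\nabla_X g)(Z))\big)$ for all $Y$, and since $g$ is non-degenerate the two vectors paired against $Y$ must coincide, yielding $\nabla^*_X Z = \nabla_X Z + g^{-1}((\nabla_X g)(Z))$, i.e.\ the claimed formula after renaming $Z$ to $Y$. The same non-degeneracy argument delivers uniqueness: if two connections both satisfy the defining relation, they produce the same value $g(Y, \nabla^*_X Z)$ for every $Y$, hence agree.

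The computation is essentially routine, so I do not expect a serious obstacle; the only point requiring genuine care is the bookkeeping of the two arguments of $\nabla_X g$ together with the precise convention for $g^{-1}$, where the symmetry of $\nabla_X g$ is exactly what guarantees the formula is well posed. A secondary, equally routine verification I would append is that the right-hand side of the formula really defines an affine connection: $C^\infty(M)$-linearity in $X$ and the Leibniz rule in $Y$ both follow since $\nabla$ is a connection while $g^{-1}\circ(\nabla_{(\cdot)} g)$ is tensorial in $X$ and carries the correct Leibniz-type behaviour in its vector argument.
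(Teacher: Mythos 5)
Your proof is correct, and it is precisely the ``direct computation'' that the paper invokes without writing out: expand $X(g(Y,Z))$ via $\nabla_X g$, cancel, and use non-degeneracy of $g$ (together with the symmetry of $\nabla_X g$) to extract both the formula and uniqueness. Nothing differs in substance from what the paper intends, and your added check that the formula defines a connection is a harmless bonus.
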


\subsection{Almost K\"ahler structures on the tangent bundle}
Let $(M,g)$ be a pseudo-Riemannian manifold and let $\nabla$ be an affine connection on $M$. Let $\pi:TM \rightarrow M$ be the canonical projection and let $\pi_*:T(TM) \rightarrow TM$ be the tangent map of $\pi$. In \cite {Dombrowski}, § 2, Dombrowski constructed a map  $K:T(TM) \rightarrow TM$, depending on the connection $\nabla$, with the property that for every $a\in TM$ and for every $X, Y\in T_{\pi_*(a)}M$ there exists exactly one $Z\in T_a(TM)$ such that $\pi_*(Z)=X$ and $KZ=Y$, (cf. \cite {Dombrowski}, Lemma 1). We can use these maps to decompose $T(TM)$ in horizontal and vertical subbundles with respect to $\nabla$, namely $T(TM)=H(T(TM))\oplus V(T(TM))$ with $V(T(TM)) = \ker \pi_*$ and $H(T(TM)) = \ker K$. This decomposition allows us to define a natural almost complex structure $\hat J={\hat J}^\nabla$ on $TM$ as follows: for $Z \in T_a(TM)$ define $\hat J (Z)\in T_a(TM)$ as the unique vector such that 
$$\pi_*(\hat J Z)=-KZ, \, \, K(\hat J Z)=\pi_*(Z).$$ 
Moreover if $Z=Z^h+Z^v$ is the decomposition of $Z$ in horizontal and vertical components we have $\hat J (Z^h)\in V(T(TM))$ and $\hat J (Z^v)\in H(T(TM))$, \cite{Dombrowski}.\\
It is known that $\hat J$ is integrable if and only if $\nabla$ is flat and torsion free, \cite{Dombrowski}.\\
Let $\hat G={{\hat G}^\nabla}_g$ be the metric on $TM$ defined by $(g,\nabla)$ as $\hat G:=g\oplus g.$ This metric was introduced by Sasaki, in the fundamental paper \cite{Sasaki}, for tangent bundles of Riemannian manifolds and then generalized for tangent bundles of pseudo-Riemannian manifolds, \cite{Dida}. It is now a standard notion in differential geometry, called the {\it{Sasaki metric}}.\\
We can easily verify that $(TM,\hat J, \hat G)$ is an almost Hermitian manifold with K\"ahler form $\omega(\, ,\,):=\hat G(\hat J\, ,\,)$.

The following is known, \cite{Satoh}. 
\begin{prop} For $(TM,\hat J, \hat G)$ induced by $(M,g,\nabla)$ the following statement are equivalent:\\
1. $d\omega=0$;\\
2. $T^{{\nabla}^*}=0$;\\
3. $(M,g,\nabla)$ is a quasi-statistical manifold.
\end{prop}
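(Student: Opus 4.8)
The plan is to reduce everything to the values of the fundamental $2$-form $\omega$ and of $d\omega$ on horizontal and vertical lifts, and to organise the three conditions around that computation. First I would record $\omega$ on lifts: since $\hat G = g \oplus g$ splits along $H(TM) \oplus V(TM)$ and $\hat J$ interchanges the two summands, one gets $\omega(X^H, Y^V) = g(X,Y)$, $\omega(X^V, Y^H) = -g(X,Y)$ and $\omega(X^H, Y^H) = \omega(X^V, Y^V) = 0$. Because $d\omega$ is an alternating tensorial $3$-form and the lifts span $T(TM)$ pointwise, it suffices to evaluate $d\omega$ on the four types of triples of lifts. Using the standard bracket relations $[X^H, Y^V] = (\nabla_X Y)^V$, $[X^V, Y^V] = 0$ and $[X^H, Y^H] = [X,Y]^H - (R^\nabla(X,Y)a)^V$ at $a \in TM$ (with $R^\nabla$ the curvature of $\nabla$), together with the Koszul-type formula for $d$ of a $2$-form, the triples $(X^V, Y^V, Z^V)$ and $(X^H, Y^V, Z^V)$ give $0$ identically.

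The heart of the matter is the remaining two types. For $(X^H, Y^H, Z^V)$ I would expand the function terms $X^H\omega(Y^H, Z^V) = X\,g(Y,Z)$, etc., and rewrite each derivative of $g$ through $\nabla g$; after the symmetric terms in $\nabla_X Z$ and $\nabla_Y Z$ cancel, what survives is exactly
\[ d\omega(X^H, Y^H, Z^V) = (\nabla_X g)(Y,Z) - (\nabla_Y g)(X,Z) + g\big(T^\nabla(X,Y), Z\big), \]
which is the pairing with $Z$ of Kurose's quasi-statistical identity. Hence the $(H,H,V)$-component of $d\omega$ vanishes for all $X,Y,Z$ if and only if $(M,g,\nabla)$ is quasi-statistical; this already yields $(1) \Rightarrow (3)$ and one half of $(3) \Rightarrow (1)$. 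The equivalence $(2) \Leftrightarrow (3)$ is then purely algebraic: from the earlier formula ${\nabla^*}_X Y = \nabla_X Y + g^{-1}((\nabla_X g)(Y))$ one computes $T^{\nabla^*}(X,Y) = T^\nabla(X,Y) + g^{-1}\big((\nabla_X g)(Y) - (\nabla_Y g)(X)\big)$, and since $g$ is non-degenerate the vanishing of $T^{\nabla^*}$ is the same equation as quasi-statisticality.

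For $(X^H, Y^H, Z^H)$ only the vertical parts of the brackets contribute, and a short computation gives
\[ d\omega(X^H, Y^H, Z^H) = -\,\mathfrak{S}_{X,Y,Z}\, g\big(R^\nabla(X,Y)a, Z\big), \]
where $\mathfrak{S}$ is the cyclic sum. This looks like an \emph{extra}, curvature-type obstruction absent from the statement, and the point I would stress is that it is not independent but is controlled by the dual connection. Using the duality identity $g\big(R^\nabla(X,Y)a, Z\big) = -\,g\big(a, R^{\nabla^*}(X,Y)Z\big)$ (valid for dual connections even with torsion), this cyclic sum equals $-\,g\big(a,\ \mathfrak{S}_{X,Y,Z} R^{\nabla^*}(X,Y)Z\big)$, and the first Bianchi identity for $\nabla^*$ reads $\mathfrak{S}_{X,Y,Z} R^{\nabla^*}(X,Y)Z = \mathfrak{S}_{X,Y,Z}\big(({\nabla^*}_X T^{\nabla^*})(Y,Z) + T^{\nabla^*}(T^{\nabla^*}(X,Y),Z)\big)$. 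Thus $T^{\nabla^*} = 0$ forces this sum, hence the $(H,H,H)$-component, to vanish. Assembling the pieces: $d\omega = 0$ makes the $(H,H,V)$-component vanish, giving $(3)$ and hence $(2)$; conversely $(2)$ kills both the $(H,H,V)$-component (quasi-statisticality) and the $(H,H,H)$-component (Bianchi), while the other two types are always zero, so $d\omega = 0$. The \textbf{main obstacle} is precisely the $(H,H,H)$-component: recognising that the apparent curvature obstruction is annihilated by the vanishing torsion of $\nabla^*$ through the duality relation and Bianchi's identity, rather than imposing a genuinely new curvature condition.
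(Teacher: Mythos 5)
Your proof is correct, but note that the paper offers no proof of this proposition at all --- it is quoted as a known result with a citation to Satoh --- so the comparison here is with the literature rather than with an argument in the text. Your route is the standard one and it is complete: the bracket relations $[X^V,Y^V]=0$, $[X^H,Y^V]=(\nabla_X Y)^V$, $[X^H,Y^H]=[X,Y]^H-(R^\nabla(X,Y)a)^V$ do hold for connections with torsion; the $(H,H,V)$-component of $d\omega$ equals $(\nabla_X g)(Y,Z)-(\nabla_Y g)(X,Z)+g\bigl(T^\nabla(X,Y),Z\bigr)$, which is Kurose's identity paired with $Z$; the torsion of the dual connection computed from the paper's formula ${\nabla^*}_X Y=\nabla_X Y+g^{-1}((\nabla_X g)(Y))$ is $T^{\nabla^*}(X,Y)=T^\nabla(X,Y)+g^{-1}\bigl((\nabla_X g)(Y)-(\nabla_Y g)(X)\bigr)$, so $(2)\Leftrightarrow(3)$ follows from non-degeneracy of $g$; and the curvature-duality identity $g(R^\nabla(X,Y)W,Z)=-g(W,R^{\nabla^*}(X,Y)Z)$, valid for dual pairs regardless of torsion, combined with the first Bianchi identity with torsion for $\nabla^*$, does annihilate the $(H,H,H)$-component once $T^{\nabla^*}=0$. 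Your chain $(1)\Rightarrow(3)\Leftrightarrow(2)\Rightarrow(1)$ closes the circle, so no implication is missing, and your closing remark correctly isolates the one genuinely delicate point: without the duality-plus-Bianchi step, the cyclic sum of $g(R^\nabla(X,Y)a,Z)$ over horizontal triples would appear to be an independent curvature obstruction, whereas it is in fact a consequence of condition (2). What your write-up adds over the paper's bare citation is precisely this explanation of why no curvature hypothesis is needed in the statement.
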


As a consequence of our construction and of previous proposition, we get the following.

\begin{prop} Let $M$ be a real parallelizable manifold of real even dimension $2n$, let $(J,g)$ be the Norden structure on $M$ defined before and let $(g, {\bar \nabla}^\alpha)_{1\leq \alpha \leq 2n}$ be the corresponding family of quasi statistical structures. Then $\mathbb W=(TM, {{\hat {G}}_g}^{\nabla^\alpha}, {{\hat {J}}}^{\nabla^\alpha})$ is an almost K\"ahler manifold.
\end{prop}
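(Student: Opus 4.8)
The plan is to read this statement as a formal consequence of two results already established in this section, linked together by the equivalence in Satoh's proposition. First I would recall that an almost K\"ahler manifold is by definition an almost Hermitian manifold whose fundamental $2$-form $\omega(\cdot,\cdot)=\hat G(\hat J\cdot,\cdot)$ is closed. Since it was already observed that, for any pseudo-Riemannian manifold $(M,g)$ carrying an affine connection, the triple $(TM,\hat J,\hat G)$ is almost Hermitian, the only point left to verify is that $d\omega=0$.

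The main step is then to invoke Satoh's proposition with $\nabla=\bar\nabla^\alpha$. That proposition asserts that, for $(TM,\hat J^{\nabla},\hat G^{\nabla}_g)$ built from a datum $(M,g,\nabla)$, one has $d\omega=0$ if and only if $(M,g,\nabla)$ is a quasi-statistical manifold. Hence it suffices to know that each pair $(g,\bar\nabla^\alpha)$ defines a quasi-statistical structure on $M$.

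But this is precisely the content of the corollary obtained above from the Blaga--Nannicini construction: starting from the Norden structure $(J,g)$ and the coframe $\{v^1,\dots,v^n,w^1,\dots,w^n\}$, each connection $\bar\nabla^\alpha=\nabla^g+J\otimes\eta$, with $\eta$ one of these $1$-forms, yields a quasi-statistical structure $(g,\bar\nabla^\alpha)$. Feeding this into Satoh's equivalence gives $d\omega=0$ for every $\alpha=1,\dots,2n$, and therefore $\mathbb W=(TM,\hat G^{\nabla^\alpha}_g,\hat J^{\nabla^\alpha})$ is almost K\"ahler, as claimed.

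I do not expect a genuine obstacle, since the statement is really a corollary obtained by chaining the quasi-statistical property of the $\bar\nabla^\alpha$ with Satoh's characterization of the closedness of the Sasaki K\"ahler form. The only point requiring a little care is to use the \emph{same} connection $\bar\nabla^\alpha$ to construct both the almost complex structure $\hat J$ and the Sasaki metric $\hat G$, so that Satoh's proposition applies verbatim; this is indeed how $\mathbb W$ is defined.
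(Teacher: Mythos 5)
Your proposal is correct and is exactly the argument the paper intends: the paper states this proposition ``as a consequence of our construction and of the previous proposition,'' i.e., the corollary giving the quasi-statistical structures $(g,\bar\nabla^\alpha)$ combined with Satoh's equivalence, together with the earlier remark that $(TM,\hat J,\hat G)$ is almost Hermitian. Your explicit chaining of these three ingredients, including the care about using the same connection $\bar\nabla^\alpha$ for both $\hat J$ and $\hat G$, matches the paper's (implicit) proof.
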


\vspace{1cm}
\noindent
{\bf Conflict of interest.}
On behalf of all authors, the corresponding author states that there is no conflict of interest.

\bibliographystyle{alpha}

\end{document}